\newcommand{\Zbb}{\mathbb{Z}}
\newcommand{\Nbb}{\mathbb{N}}
\newcommand{\Qbb}{\mathbb{Q}}
\newcommand{\Cbb}{\mathbb{C}}
\newcommand{\Rbb}{\mathbb{R}}
\newcommand{\A}{\mathcal{A}}
\newcommand{\B}{\mathcal{B}}
\newcommand{\C}{\mathcal{C}}
\newcommand{\im}{\mathrm{Im}\,}
\newcommand{\id}{\mathrm{id}}
\newcommand{\M}{\mathcal{M}}
\newcommand{\N}{\mathcal{N}}
\newcommand{\Ee}{\mathrm{E}}
\newcommand{\I}{\mathrm{I}}
\newcommand{\Ss}{\mathrm{S}}
\newcommand{\GL}{\operatorname{GL}}
\newcommand{\ef}{\operatorname{ef}}
\newcommand{\EC}{C^{\operatorname{ef}}}
\newcommand{\Hom}{\operatorname{Hom}}
\renewcommand{\d}[1]{\ensuremath{\operatorname{d}\!{#1}}}
\newcommand*\circled[1]{\tikz[baseline=(char.base)]{
            \node[shape=circle,draw,inner sep=2pt] (char) {#1};}}
\theoremstyle{plain}
\newtheorem{thm}{Theorem}[section]
\newtheorem{lem}[thm]{Lemma}
\newtheorem{prop}[thm]{Proposition}
\newtheorem{cor}[thm]{Corollary}
\newtheorem{constr}[thm]{Construction}
\newenvironment{mythm}[1]
  {\innercustomthm}
  {\endinnercustomthm}
\theoremstyle{definition}
\newtheorem{defn}[thm]{Definition}
\newtheorem*{exmp}{Example}
\theoremstyle{remark}
\newtheorem*{rem}{Remark}
\begin{document}
\title[Rational homotopy equivalence]{Rational Homotopy Equivalence\\ -- An Algorithmic Approach}
\begin{abstract}
This article proposes an algorithm that constructs a Sullivan minimal model for any simply connected simplicial set with effective homology and thereby allows one to  decide algorithmically whether two simply connected spaces represented by finite simplicial sets have the same rational homotopy type.
\end{abstract}
\author{M\'{a}ria \v{S}imkov\'{a}}
\address{Department of Mathematics and Statistics, Masaryk University, Kotl\'{a}\v{r}sk\'{a} 2, 611 37 Brno,  Czech Republic}
\curraddr{}
\email{simkova@math.muni.cz}
\thanks{}
\keywords{Differential graded algebra, Sullivan model, effective homology, rational homotopy type, Postnikov tower, Hirsch extension, algorithm, orbit-stabilizer problem}
\subjclass[2010]{55S45, 68U05, 55U10.}

\date{\today}

\dedicatory{}

\maketitle

\section{Introduction}
The main goal of this article is to present an algorithm that finds a minimal Sullivan model for 
a simply connected simplicial set with effective homology. (Any finite simplicial complex is a special case.) Using this construction and an algorithm on orbits and stabilizers of algebraic groups, one can algorithmically decide whether two finite simply connected  simplicial sets have the same rational homotopy type.

In \cite{MB} the authors describe an algorithm that finds a minimal Sullivan model of any simply connected differential graded algebra which is finitely presentable with generators of positive degree. The construction follows the classical proof of the existence of such a minimal model; see Proposition 12.2 in \cite{FHT}. However, this algorithm cannot be used to compute a minimal Sullivan model of a simply connected finite simplicial set $X$, since the differential graded algebra $\mathcal A_{PL}(X)$ has generators of degree $0$ and is not usually given as a quotient  of a free algebra by a differentiable ideal. Therefore, we first generalize their algorithm to a class of more suitable differential graded algebras. These are differential graded algebras that are strongly homotopy equivalent to chain complexes which are finitely generated in all degrees
(see Chapter 2).

In the next step, we show that the differential graded algebra $\mathcal A_{PL}(X)$ of a simplicial set $X$ with effective homology satisfies the assumptions of our algorithm. To do this, we use Dupont's result \cite{JD2} on the reduction of the algebra $\mathcal A_{PL}(X)$ to a cochain complex of $X$ with rational coefficients and show that all mappings that appear in the reduction are algorithmically computable.

Finally, we exploit the fact that two simply connected spaces with finitely generated homotopy groups have the same rational homotopy type if and only if their minimal Sullivan models are isomorphic. We translate the decision whether two simply connected simplicial sets have the same rational homotopy type into the decision whether their minimal Sullivan models are isomorphic. We solve this problem using Sarkisyan's algorithm on algebraic $\mathbb Q$-groups, see \cite{RAS}.

The inspiration for our research was the article by Nabutovsky and Weinberger
\cite{NW}, in which they argue that the question of whether two simply connected spaces are homotopy equivalent is algorithmically solvable. They propose to use rational homotopy theory for the algorithm, but do not provide any description of the algorithm. This article, together with the article \cite{MS2}, paves the way to the realization of their idea.

The paper starts with a quick recapitulation of basic concepts on simplicial sets and the effective homology framework, which is behind the algorithmic approach to homotopy theory. In Section 3 we recall notions concerning differential graded algebras, mi\-ni\-mal algebras, and minimal Sullivan models and describe the algorithm which assigns a minimal model to every differential graded algebra that is strongly homotopy equivalent to a cochain complex with finite homology (Theorem A). The next Section deals with algebraic groups and automorphisms of differential graded algebras. Section 5 is devoted to rational homotopy theory. We recall the notion of homotopy in DGAs and summarize existing results related to a correspondence of rational homotopy equivalences and isomorphisms of minimal models. At the end of this section, we define Hirsch extensions of a DGA and their relation to minimal models of Postnikov stages. The aim of the following section is to recall a version of the de Rham Theorem stating that the algebra of polynomial differential forms on a simplicial set can be reduced to the cochain complex of this set with rational coefficients. In the last section, we describe an algorithm assigning a minimal model to a finite simply connected simplicial set (Theorem C) and design our main algorithm which decides if two finite simply connected simplicial sets are rationally homotopy equivalent (Theorem D).
\section{Preliminaries on simplicial sets with effective homology}

We will work with simplicial sets. For basic information on simplicial sets, we refer to the comprehensive sources \cite{M, C, GJ}. In this section, we summarize the results we need for an algorithmic approach to the homotopy theory of simplicial sets.

\subsection*{Simplicial sets}
The standard (geometric) $n$-simplex is the set
\[\Delta^n=\{(x_0,x_1,\dots,x_n)\in \mathbb R^{n+1};\ \sum_{i=0}^n x_i=1,\ x_i\ge 0 \text{ for all }i \}. \] 
Its vertices will be denoted $e_0, e_1,\dots,e_n$. The symbol $\Delta[n]$ stands for the simplicial set whose $k$-simplices are $(k+1)$-tuples $(i_0,i_1,\dots, i_k)$ of integers such that
$0\le i_0\le i_1\le \dots \le i_k\le n$. The geometric realization of $\Delta[n]$ is $\Delta^n$.

Let $\pi$ be an abelian group.
In the context of simplicial sets, the Eilenberg-MacLane simplicial set $K(\pi,n)$ is defined through its standard minimal model, in which $k$-simplices are given by cocycles on nondegenerate $n$-simplices of $\Delta[k]$:
\[K(\pi,n)_k=Z^n(\Delta[k],\pi).\]
It will be essential that this model of $K(\pi,n)$ is a simplicial group and hence a Kan complex. Similarly, we define a simplicial set $E(\pi,n)$ whose $k$-simplices are given by cochains: 
\[E(\pi,n)_k=C^n(\Delta[k],\pi).\]
The previous definitions lead to a natural principal fibration known as Eilenberg-MacLane fibration $\delta\colon E(\pi,n)\to K(\pi,n+1)$ for $n\geq 1$. These fibrations play an important role in the construction of Postnikov towers.

\begin{defn}
Let $Y$ be a simplicial set. A simplicial Postnikov tower  for $Y$ is the following collection of mappings and simplicial sets organized into the commutative diagram
\begin{equation}
\begin{tikzpicture}
\matrix (m) [matrix of math nodes, row sep=2em,
column sep=2em, minimum width=2em]
{  &[2cm] Y_n\\
    &[2cm] \vdots  \\
   &[2cm] Y_1 \\
  Y &[2cm] Y_0\\
};
  \begin{scope}[every node/.style={scale=.8}]
\path[->](m-4-1) edge node[above] {$\varphi_0$} (m-4-2);
\path[->](m-4-1) edge node[above] {$\varphi_1$} (m-3-2);
\path[->](m-4-1) edge node[auto] {$\varphi_n$} (m-1-2);
\path[->](m-3-2) edge node[auto] {$p_1$} (m-4-2);
\path[->](m-2-2) edge node[auto] {$p_2$} (m-3-2);
\path[->](m-1-2) edge node[auto] {$p_n$} (m-2-2);
\end{scope}
\end{tikzpicture}\label{3d2}
\end{equation}
such that for each $n\geq0$ the map $\varphi_n\colon Y\to Y_n$
induces isomorphisms $\varphi_{n*}\colon \pi_k(Y)\to \pi_k(Y_n)$
of homotopy groups with $0\leq k\leq n$, and $\pi_k(Y_n)=0$ for $k\geq n+1$.
The simplicial set $Y_n$ is called the $n$-th Postnikov stage. 
\end{defn}

\begin{defn}\label{3d1}
Let $Y$ be a simply connected simplicial set. A standard Postnikov tower is a Postnikov tower such that  $Y_n$ is the pullback of the fibration $\delta$ along a map $k_{n-1}\colon Y_{n-1}\to K(\pi_n(Y),n+1)$ for all $n\geq 1$:
\begin{center}
\begin{tikzpicture}
\matrix (m) [matrix of math nodes, row sep=2em,
column sep=2em, minimum width=2em]
{  Y_n & E(\pi_n(Y),n)  \\
  Y_{n-1} & K(\pi_n(Y),n+1) \\
};
  \begin{scope}[every node/.style={scale=.8}]
\path[->](m-1-1) edge node[above]{$r_n$}  (m-1-2);
\path[->](m-1-1) edge node[left] {$p_n$} (m-2-1);
\path[->](m-2-1) edge node[below] {$k_{n-1}$} (m-2-2);
\path[->](m-1-2) edge node[right] {$\delta$} (m-2-2);
\end{scope}
\end{tikzpicture}
\end{center}
The map $k_{n-1}$ is called a Postnikov map. Since $p_n$ are pullbacks of the Kan fibrations $\delta$, they are also Kan fibrations.
\end{defn}

\subsection*{Effective homology}
Here we look at the basic notions of the effective homology framework. 
This paradigm was developed by Sergeraert and his collaborators to deal with infinitary objects, see \cite{RS} (or \cite{poly}) for more details. \\

A locally effective simplicial set is a simplicial set whose simplices
have a specified finite encoding, and whose face and degeneracy operators are specified by algorithms.

We will work with non-negatively graded chain or cochain complexes of free abelian groups or $\Qbb$-vector spaces. Such a chain complex is locally effective if elements of the graded module can be represented in a computer and the operations of zero, addition, and differential are computable.\\

In all parts of the paper where we deal with algorithms, all simplicial sets are locally effective, and all chain complexes
are non-negatively graded locally effective chain complexes of free $\Zbb$-modules or $\Qbb$-vector spaces. All simplicial maps, chain maps, chain homotopies, etc., are computable.\\

An effective chain complex is a (locally effective) free chain complex equipped
with an algorithm that generates a list of elements of the distinguished basis in any given dimension (in particular, the distinguished bases are finite in each dimension).

\begin{defn}
[\cite{RS}]
Let $(C,d_C)$ and $(D,d_D)$ be chain complexes. A triple of mappings $(f\colon C\to D,g\colon D\to C,h\colon C\to C)$ is called a reduction if the following holds
\begin{itemize}
\item[i)] $f$ and $g$ are chain maps of degree $0$,
\item[ii)] $h$ is a map of degree 1,
\item[iii)] $fg=\id_D$ and $\id_C-gf=[d_C,h]=d_Ch+hd_C$,
\item[iv)] $fh=0$, $hg=0$ and $hh=0$ known as side conditions.
\end{itemize}
The reductions are denoted as $(f,g,h)\colon (C,d_C)\Rightarrow (D,d_D)$.  If we deal with cochain complexes, $d_C$ and $d_D$ have degree $1$ and $h$ is a map of degree $-1$.)

\begin{rem}
If a triple of maps $(f,g,h)$ satisfies only conditions i) to iii) we can change the maps such that new maps satisfy not only conditions i), ii), iii) but also the side conditions iv). See 2.1 of \cite{LS}. We need these conditions to harness the power of effective homology framework. 
\end{rem}

A strong homotopy equivalence $C\Longleftrightarrow D$ between chain complexes $C$, $D$ is a chain complex $E$ together with a pair of reductions $C\Leftarrow E\Rightarrow D$.

Let $C$ be a chain complex. We say that $C$ is equipped with effective
homology if there is a specified strong equivalence $C\Longleftrightarrow \EC$
of $C$ with some effective chain complex $\EC$.

Similarly, we say that a simplicial set has (or can be equipped with) effective homology if its chain complex generated by nondegenerate simplices is equipped with  effective homology.
\end{defn}


\begin{lem}\label{HomEf}
Any strong homotopy equivalence $C\Longleftrightarrow D$ between chain complexes $C_*$ and $D_*$ of free Abelian groups induces strong homotopy equivalences $C\otimes\Qbb \Longleftrightarrow D\otimes\Qbb$ and  $\Hom(C,\Qbb)\Longleftrightarrow \Hom(D,\Qbb)$ of chain and cochain complexes of $\Qbb$-vector spaces, respectively.
\end{lem}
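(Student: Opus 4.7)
The plan is to split the statement into two claims, one per functor, and in each case to check that the functor transforms a single reduction into a reduction. A strong homotopy equivalence $C\Longleftrightarrow D$ is by definition a pair of reductions sharing a common source $E$, so once each functor is known to send a reduction to a reduction, applying it componentwise to both legs of the zigzag $C\Leftarrow E\Rightarrow D$ will immediately give a strong homotopy equivalence of the same shape on the target side.

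For the covariant functor $-\otimes_{\Zbb}\Qbb$ the argument should be transparent. Given $(f,g,h)\colon C\Rightarrow D$, I would propose $(f\otimes\id_{\Qbb},\,g\otimes\id_{\Qbb},\,h\otimes\id_{\Qbb})$ as a reduction $C\otimes\Qbb\Rightarrow D\otimes\Qbb$. All five identities from the definition of a reduction, namely $fg=\id_D$, $\id_C-gf=d_Ch+hd_C$, $fh=0$, $hg=0$, and $hh=0$, are pointwise algebraic relations that are preserved by any additive functor, so they transport verbatim. Since $C$ is $\Zbb$-free, no flatness concern arises.

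For the contravariant functor $\Hom_{\Zbb}(-,\Qbb)$, the positions of $f$ and $g$ get swapped upon dualization. I would claim that $(g^*,f^*,h^*)$ is a reduction
\[
\Hom_{\Zbb}(C,\Qbb)\Longrightarrow\Hom_{\Zbb}(D,\Qbb)
\]
of cochain complexes, with $h^*$ of degree $-1$ in accordance with the parenthetical convention in the definition. The verification is a short computation: $g^*f^*=(fg)^*=\id$; then $\id-f^*g^*=(\id-gf)^*=(d_Ch+hd_C)^*=d_C^*h^*+h^*d_C^*$; and finally the three side conditions $g^*h^*=(hg)^*=0$, $h^*f^*=(fh)^*=0$, $h^*h^*=(hh)^*=0$ are immediate. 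Applied to each of the two reductions constituting $C\Leftarrow E\Rightarrow D$, this yields the required zigzag $\Hom_{\Zbb}(C,\Qbb)\Leftarrow\Hom_{\Zbb}(E,\Qbb)\Rightarrow\Hom_{\Zbb}(D,\Qbb)$ of cochain complexes.

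The only real subtlety is bookkeeping: one must keep track that after dualization it is $g^*$ (not $f^*$) that plays the role of the projection, so that the equation $FG=\id$ holds with $F=g^*$ and $G=f^*$. There is no conceptual obstacle beyond this routine verification, and indeed both claims reduce to the elementary fact that the reduction axioms are diagrammatic identities preserved by any additive functor, covariant or contravariant.
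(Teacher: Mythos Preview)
Your proposal is correct. The paper states this lemma without proof, treating it as routine; your argument supplies precisely the standard verification the paper leaves implicit. Decomposing the strong equivalence into its two constituent reductions and checking that each additive functor (the covariant $-\otimes_{\Zbb}\Qbb$ and the contravariant $\Hom_{\Zbb}(-,\Qbb)$) preserves the five reduction identities is exactly the expected approach, and your bookkeeping on the contravariant side---swapping $f$ and $g$ so that $(g^*,f^*,h^*)$ realizes the reduction with homotopy still living on the dual of the larger complex---is handled correctly. One cosmetic remark: the freeness of $C$ is not actually needed for the tensor argument (the reduction axioms are equalities of maps and are preserved by any additive functor; $\Qbb$ is in any case flat over $\Zbb$), so that sentence could be dropped without loss.
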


\begin{lem}\label{CHE}
Let $C\Longleftrightarrow D$ be a strong homotopy equivalence between  chain complexes. Then there are maps
$f\colon C_*\to D_*$, $g\colon D_*\to C_*$, $h_C\colon C_*\to C_{*+1}$ and $h_D\colon D_*\to D_{*+1}$ such that
\[ gf-\id_C=d_Ch_C+h_Cd_C\quad\text{and}\quad fg-\id_D=d_Dh_D+h_Dd_D.\]
\end{lem}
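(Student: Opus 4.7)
The plan is to unwind the definition of a strong homotopy equivalence and produce the maps explicitly by composition. By hypothesis, there is a chain complex $E$ together with reductions
\[
(f_1,g_1,h_1)\colon E\Rightarrow C\qquad\text{and}\qquad (f_2,g_2,h_2)\colon E\Rightarrow D.
\]
In particular, $f_1,f_2,g_1,g_2$ are chain maps, $f_1g_1=\id_C$, $f_2g_2=\id_D$, and
\[
\id_E-g_1f_1=d_Eh_1+h_1d_E,\qquad \id_E-g_2f_2=d_Eh_2+h_2d_E.
\]
I would then define
\[
f:=f_2g_1\colon C\to D,\qquad g:=f_1g_2\colon D\to C,
\]
which are chain maps as compositions of chain maps.

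The core calculation is to show that these satisfy the two homotopy relations. Composing, one gets
\[
gf=f_1g_2f_2g_1=f_1(\id_E-d_Eh_2-h_2d_E)g_1=f_1g_1-f_1d_Eh_2g_1-f_1h_2d_Eg_1.
\]
Using that $f_1,g_1$ are chain maps (so $f_1d_E=d_Cf_1$ and $d_Eg_1=g_1d_C$) and that $f_1g_1=\id_C$, this rewrites as
\[
gf-\id_C=-d_C(f_1h_2g_1)-(f_1h_2g_1)d_C.
\]
Hence setting $h_C:=-f_1h_2g_1\colon C_*\to C_{*+1}$ gives $gf-\id_C=d_Ch_C+h_Cd_C$. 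The entirely symmetric calculation, starting from $fg=f_2g_1f_1g_2$ and substituting $g_1f_1=\id_E-d_Eh_1-h_1d_E$, yields $h_D:=-f_2h_1g_2\colon D_*\to D_{*+1}$ with the desired property.

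There is no real obstacle here: everything reduces to the two reduction identities together with the fact that $f_1,f_2,g_1,g_2$ intertwine the differentials. The only mild subtlety is bookkeeping of signs and ensuring the degree shift of $h_C,h_D$ is correct, which is forced by the degree $+1$ of $h_1,h_2$ and the degree $0$ of the remaining four maps. One could additionally observe (though it is not required by the statement) that the side conditions $h_ih_i=0$, $f_ih_i=0$, $h_ig_i=0$ make $h_C$ and $h_D$ inherit partial side conditions; this may be useful elsewhere in the paper but is not needed for the present lemma.
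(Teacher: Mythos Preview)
Your argument is correct and is exactly the natural one: compose the two reductions through the intermediate complex $E$ and push the homotopy $h_2$ (resp.\ $h_1$) down to $C$ (resp.\ $D$) via $f_1(-)g_1$ (resp.\ $f_2(-)g_2$). The sign and degree bookkeeping is right.

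The paper itself does not supply a proof for this lemma; it is stated and immediately used, being regarded as a standard consequence of the definition of strong homotopy equivalence. Your write-up is precisely the kind of verification the reader is expected to carry out, so there is nothing to compare against beyond noting that your approach is the intended one.
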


It is clear that all finite simplicial sets have effective homology. It is essential from the algorithmic point of view that many infinite simplicial sets also have effective homology. 

\begin{prop}[\cite{poly}, Section 3]
Let $n\geq 1$ be a fixed integer and $\pi$ a finitely generated abelian group. The standard simplicial model of the Eilenberg-MacLane space can be equipped with effective homology.

If $P$ is a simplicial set  equipped  with  effective  homology and $f\colon P\to K(\pi,n+1)$ is  computable, then  the  pullback $Q$ of $\delta\colon E(\pi,n)\to K(\pi,n+1)$ along $f$ can  be  equipped  with  effective homology.
\end{prop}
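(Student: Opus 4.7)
The plan is to establish both assertions via the Basic Perturbation Lemma (BPL), the workhorse of effective homology, which propagates a strong equivalence across a small perturbation of the differential.

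For the first part, I would proceed by induction on $n$. In the base case $n=1$, splitting the finitely generated abelian group $\pi$ as a direct sum of cyclic factors reduces the problem to the standard small effective chain complexes for $K(\Zbb,1)$ and $K(\Zbb/m,1)$, which are classical. For the inductive step, identify $K(\pi,n+1)\cong \bar W K(\pi,n)$, the classifying space of the simplicial abelian group $K(\pi,n)$. The Eilenberg--MacLane theorem provides a computable reduction from the normalized chains $C_*(\bar W G)$ onto the bar construction $B(C_*(G))$. Granting inductively that $C_*(K(\pi,n))$ is strongly equivalent to an effective chain complex, functoriality of the bar construction together with one application of BPL (to transfer the bar construction from $C_*(K(\pi,n))$ to its effective model) yields an effective chain complex strongly equivalent to $C_*(K(\pi,n+1))$.

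For the second part, I would present the pullback $Q$ as a twisted Cartesian product $P\times_\tau K(\pi,n)$, where the twisting cochain $\tau$ is extracted algorithmically from $f$. The Eilenberg--Zilber theorem supplies a computable reduction
\[ C_*(P\times K(\pi,n))\;\Longrightarrow\; C_*(P)\otimes C_*(K(\pi,n)) \]
built from the Alexander--Whitney and shuffle maps, and the twist on $Q$ becomes a perturbation of the resulting differential on the tensor product. A further application of BPL, this time using the tensor product of the strong equivalences supplied by the effective homology of $P$ (by hypothesis) and of $K(\pi,n)$ (by the first part), transports effective homology to $C_*(Q)$.

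The principal technical obstacle I expect is verifying the local nilpotency hypothesis of BPL at every step and certifying computability of each composite. Nilpotency is ensured by a filtration argument: the perturbation induced by a twisting cochain strictly lowers the simplicial filtration, so on any fixed finite-dimensional subcomplex only finitely many iterates contribute and the BPL formulas terminate. Computability reduces to that of the Alexander--Whitney map, the shuffle map, the bar construction, and the twisting cochain extracted from $f$, all of which are standard but must be implemented with care. Once these are in place, chaining the reductions produces the effective chain complexes asserted in the proposition.
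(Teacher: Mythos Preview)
The paper does not give its own proof of this proposition: it is stated with the citation ``\cite{poly}, Section 3'' and used as a black box. So there is no argument in the paper to compare your proposal against.

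That said, your sketch is essentially the standard effective-homology proof one finds in the Sergeraert school and in the cited reference: induction on $n$ via the bar construction $\bar W$ for Eilenberg--MacLane spaces, and the twisted Eilenberg--Zilber theorem plus the Basic Perturbation Lemma for the pullback of a principal fibration. The outline is correct, including the identification of the main technical point (local nilpotency of the perturbation, handled by the base-simplicial filtration). One small remark: for the base case you appeal to ``classical'' small models of $K(\Zbb,1)$ and $K(\Zbb/m,1)$; in the effective-homology literature this step is itself carried out via an explicit reduction (e.g.\ from the standard model of $K(\Zbb,1)$ to the circle), and the decomposition of $\pi$ into cyclic summands is then combined via the Eilenberg--Zilber reduction on the product, so you should be explicit that this too goes through the same machinery rather than being merely ``classical''.
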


In \cite{poly}, Section 4, one can find the algorithmic construction of a standard Postnikov tower for any simply connected simplicial set with effective homology. A shorter summary of the construction can also be found in \cite{MS2}, Section 3. Due to the previous proposition, the constructed Postnikov stages have effective homology. The Postnikov tower obtained by the construction is called effective.
\section{DGAs and minimal models}
In this section we recall the basic notions concerning differential graded algebras. We emphasize that in the paper all vector spaces and algebras are over the field $\Qbb$. 
\begin{defn}
A commutative cochain algebra (or differential graded algebra or DGA for short) is a graded vector space
 \[ \A = \bigoplus_{p\geq0}\A^p\]
together with 
\begin{itemize}
    \item a multiplication $\cdot\colon \A^p\otimes\A^q\to\A^{p+q}$ satisfying $a\cdot b=(-1)^{pq}b\cdot a$,
    \item a differential $d\colon \A^p\to\A^{p+1}$ satisfying $d^2=0$
    and the Leibnitz rule 
     \[d(a\cdot b)=d(a)\cdot b+(-1)^pa\cdot d(b).\]
\end{itemize}
If $\A^0=\Qbb$, then we say that $\A$ is connected.
If, moreover, $H^0(\A)=\Qbb$ and $H^1(\A)=0$, $\A$ is called simply connected.
\end{defn}

\begin{defn}%
A DGA $\A$ is said to be minimal if
\begin{itemize}
    \item $\A$ is free as a graded-commutative algebra on generators of degrees $\geq 2$,
    \item $d$ is decomposable i.e. $d(\A^+)\subset \A^+\cdot\A^+$ where $+$ denotes positive degree.
\end{itemize}
A DGA is free as a graded-commutative algebra if it is a tensor product of polynomial algebras on generators of even degrees and exterior algebra on generators of odd degrees. 
\end{defn}

A homomorphism of DGAs that induces isomorphism in cohomology is called a quasi-isomorphism. 

The essential notion for us is the notion of a minimal model for a DGA.
\begin{defn}
A  minimal model for a DGA $\A$ is a quasi-isomorphism $m\colon\M\to \A$ such that $\M$ is a minimal DGA. 
\end{defn}

\begin{prop}[Proposition 12.2 in \cite{FHT}]\label{emm}
For every DGA ($\A$, $d$) such that $H^0(\A)=\Qbb$  and  $H^1(\A)=0$, there is a minimal model $m\colon (\M,d)\to(\A,d)$.
\end{prop}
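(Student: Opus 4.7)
The plan is to build $\M$ as a filtered colimit $\M = \bigcup_{n \geq 1} \M(n)$ of minimal DGAs $\M(1) \subset \M(2) \subset \cdots$, together with compatible morphisms $m_n \colon \M(n) \to \A$. The inductive hypothesis at stage $n$ reads: $\M(n)$ is a minimal DGA whose algebra generators all lie in degrees $\geq 2$ and $\leq n$, the map $H^k(m_n)$ is an isomorphism for $k \leq n$, and $H^{n+1}(m_n)$ is injective. The base case $n = 1$ is handled by $\M(1) = \Qbb$ with $m_1$ the unit, where the assumptions $H^0(\A) = \Qbb$ and $H^1(\A) = 0$ cover the required approximation properties.

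For the inductive step, I enlarge $\M(n)$ by adjoining free generators of degree $n+1$ in two sub-steps. First, pick a basis $\{[z_\alpha]\}$ of $\operatorname{coker} H^{n+1}(m_n)$ with cocycle representatives $z_\alpha \in Z^{n+1}(\A)$; form $\M' = \M(n) \otimes \Lambda(v_\alpha)$ with $dv_\alpha = 0$, and extend the map by $v_\alpha \mapsto z_\alpha$. Because the new generators sit in degree $n+1$, this leaves $H^{\leq n}$ untouched while making $H^{n+1}(m')$ surjective, hence an isomorphism. Second, pick a basis $\{[c_\beta]\}$ of $\ker H^{n+2}(m')$ with representatives $c_\beta \in Z^{n+2}(\M')$; since $m'(c_\beta)$ is a coboundary in $\A$, fix $b_\beta \in \A^{n+1}$ with $db_\beta = m'(c_\beta)$, and put $\M(n+1) = \M' \otimes \Lambda(w_\beta)$ with $dw_\beta = c_\beta$ and $w_\beta \mapsto b_\beta$.

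The cohomological bookkeeping is routine: adjoining the $w_\beta$'s produces no new cocycles in degree $n+1$ (an element $x + \sum a_\beta w_\beta$ is closed only when $\sum a_\beta [c_\beta] = 0$ in $H^{n+2}(\M')$, forcing all $a_\beta = 0$ by linear independence), and it kills precisely the chosen kernel classes in $H^{n+2}$, yielding injectivity of $H^{n+2}(m_{n+1})$. Passing to the colimit $\M = \bigcup_n \M(n)$ with $m = \varinjlim m_n$ then produces a minimal DGA together with a quasi-isomorphism onto $\A$.

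The delicate point where most of the verification concentrates is the preservation of minimality at the second sub-step, namely checking that $dw_\beta = c_\beta$ lies in $(\M')^+ \cdot (\M')^+$. The key observation is that $\M'$ has generators only in degrees $2, \ldots, n+1$, so no single generator sits in degree $n+2$; consequently every monomial in $(\M')^{n+2}$ is a product of at least two positive-degree generators and is therefore automatically decomposable. This clean degree-by-degree growth is exactly what makes the inductive scheme consistent with the minimality requirement, and it is also the feature that later allows the construction to be converted into an algorithm once $\A$ is replaced by a suitable effective surrogate.
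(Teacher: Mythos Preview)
Your proof is correct and follows essentially the classical inductive construction that the paper itself uses (in algorithmic form) in the proof of Theorem~\ref{minMod}: one adjoins degree-$(k+1)$ generators to surject onto the missing part of $H^{k+1}(\A)$ and to kill $\ker H^{k+2}$, and decomposability of the new differentials is automatic because all generators lie in degrees between $2$ and $k+1$. The only cosmetic difference is that the paper adds both kinds of generators in a single step rather than in your two sub-steps $\M(n)\subset\M'\subset\M(n+1)$.
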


The purpose of this section is to describe an algorithm which constructs a minimal model for
a certain class of differential graded algebras. But before that, let us recall the concept of homotopy between two homomorphisms of DGAs.

\begin{defn}
Let $\A$ and $\B$ be DGAs. Homomorphisms $f,g\colon\A\to\B$ are homotopic if there exists a homomorphism of DGAs
$H\colon \A\to\Lambda(t,dt)\otimes\B$ such that the projections for $t=0$ and $t=1$ are equal to $f$ and $g$, respectively. We denote this relation with $\sim$. 
\end{defn}

The following lemma provides a sufficient condition for $\sim$ to be an equivalence relation.
\begin{lem}[Corollary 11.4 in \cite{GM}]
Let $\A$ be a DGA and $\M$ be a minimal DGA. The relation of being homotopic for homomorphisms from $\M$ to $\A$ is an equivalence relation.    
\end{lem}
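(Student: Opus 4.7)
The plan is to verify the three properties of an equivalence relation in turn, observing that reflexivity and symmetry hold in complete generality while transitivity is the substantive step that requires minimality of $\M$. For reflexivity, given $f\colon\M\to\A$, the homomorphism $1\otimes f\colon\M\to\Lambda(t,dt)\otimes\A$ witnesses $f\sim f$, since both $\ev_0$ and $\ev_1$ send $1$ to $1$. For symmetry, the algebra $\Lambda(t,dt)$ carries an involutive DGA automorphism $\sigma$ defined by $\sigma(t)=1-t$ (forcing $\sigma(dt)=-dt$) which exchanges the evaluations at $0$ and $1$; hence, if $H$ is a homotopy from $f$ to $g$, then $(\sigma\otimes\id_\A)\circ H$ is a homotopy from $g$ to $f$.

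The main obstacle is transitivity, which I would handle by a triangle-filling construction. Given homotopies $H_1\colon\M\to\Lambda(t,dt)\otimes\A$ from $f$ to $g$ and $H_2\colon\M\to\Lambda(t,dt)\otimes\A$ from $g$ to $h$, consider the DGA $\mathcal{T}$ of $\A$-valued polynomial differential forms on the $2$-simplex, together with its quotient DGA $\mathcal{E}$ obtained by restricting to the two edges of $\partial\Delta^2$ that meet at the middle vertex. Concretely, $\mathcal{E}$ is the pullback $(\Lambda(t,dt)\otimes\A)\times_\A(\Lambda(t,dt)\otimes\A)$, where the first factor is evaluated at $t=1$ and the second at $t=0$, both landing on $g$; the pair $(H_1,H_2)$ then determines a DGA homomorphism $\M\to\mathcal{E}$. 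The restriction map $\pi\colon\mathcal{T}\to\mathcal{E}$ is surjective because polynomial forms on two edges extend to the whole triangle, and it is a quasi-isomorphism because the inclusion of the two meeting edges into $\Delta^2$ is a deformation retraction. If one can lift $\M\to\mathcal{E}$ through $\pi$ to a homomorphism $\widetilde{H}\colon\M\to\mathcal{T}$, then restricting $\widetilde{H}$ to the remaining edge of the triangle yields a homotopy from $f$ to $h$.

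The hard part, then, is producing this lift, and it is precisely here that minimality of $\M$ is indispensable: minimal DGAs satisfy the lifting property against surjective quasi-isomorphisms. I would prove this by induction along the filtration of $\M$ by iterated Hirsch extensions. Having lifted on the sub-DGA generated by the already-treated generators, one extends over a new generator $x$ by choosing any preimage in $\mathcal{T}$ of its image in $\mathcal{E}$; decomposability of $d_\M$ means $d_\M x$ lies in the sub-DGA already handled, so the obstruction to this choice being a DGA morphism lives in the acyclic $\ker\pi$ and can be canceled by a correction from the kernel. Once the lifting step is secured, the triangle construction completes the verification that $\sim$ is transitive, and the three properties together show that $\sim$ is an equivalence relation on $\Hom(\M,\A)$.
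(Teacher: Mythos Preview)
The paper does not supply its own proof of this lemma; it simply cites Corollary~11.4 of \cite{GM}. Your proposal is a correct reconstruction of the standard argument found there: reflexivity and symmetry are immediate via $1\otimes f$ and the involution $t\mapsto 1-t$, while transitivity is obtained by filling a horn of $\Delta^2$ using the lifting property of minimal (more generally, Sullivan) algebras against surjective quasi-isomorphisms. Your sketch of that lifting property---induct up the Hirsch filtration, use decomposability of $d$ to place the obstruction in the acyclic kernel, and cancel it---is exactly the mechanism used in the cited reference, so there is nothing to add.
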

Let $\M$ be a minimal model and $\A$ a DGA. The set of homotopy classes of homomorphisms $\M\to\A$ is denoted by $[\M,\A]$.

The next three statements summarize key results from \cite{GM}.
\begin{lem}[Lemma 11.7 in \cite{GM}]\label{QIsoIso}
Let $\M$ and $\M'$ be minimal DGAs, and suppose that $\varphi\colon\M\to\M'$ is a quasi-isomorphism. Then, $\varphi$ is an isomorphism.
\end{lem}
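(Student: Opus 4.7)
I would write $\M = (\Lambda V, d)$ and $\M' = (\Lambda V', d)$ with $V,V'$ concentrated in degrees $\geq 2$, and split the problem in two. First, observe that $\varphi$ induces a degree-preserving linear map $\varphi_* \colon V \to V'$ on indecomposables (well-defined because $\varphi$ takes products to products). Second, I would show that $\varphi$ is an isomorphism of DGAs if and only if $\varphi_*$ is a linear isomorphism: the ``only if'' is trivial, and for the ``if'' direction one chooses a homogeneous basis of $V'$, lifts it via $\varphi_*^{-1}$ to $V$, and checks that $\varphi$ is determined by its restriction to generators and hence is bijective once $\varphi_*$ is. The problem thus reduces to showing that $\varphi_* \colon V^n \to V'^n$ is an isomorphism in every degree $n \geq 2$.

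I would prove this by induction on $n$. For the base case $n=2$: because $d$ is decomposable and there are no generators of degree $<2$, every element of $V^2$ is a cocycle, and because the lowest-degree nontrivial decomposable products live in degree $\geq 4$, no element of $V^2$ is a coboundary. Hence $V^2 \cong H^2(\M)$ and $V'^2 \cong H^2(\M')$ canonically, and the isomorphism $\varphi_*$ on degree $2$ is exactly $H^2(\varphi)$, which is an isomorphism by hypothesis.

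For the inductive step, I would filter both algebras by the generator degree, $\M(k) = \Lambda V^{\leq k}$ and $\M'(k) = \Lambda V'^{\leq k}$, noting that decomposability forces $d\M(k) \subset \M(k)$. Assuming inductively that $\varphi_*$ is an isomorphism in degrees $<n$, and using the reduction of the first paragraph fibrewise, $\varphi$ restricts to a DGA isomorphism $\M(n-1) \xrightarrow{\cong} \M'(n-1)$. I would then analyze the Hirsch extension $\M(n-1) \hookrightarrow \M(n) = \M(n-1) \otimes \Lambda V^n$, whose differential is determined by a linear map $V^n \to Z^{n+1}(\M(n-1))$. A short-exact-sequence-in-cochains argument (or equivalently a word-length spectral sequence collapsing at the first page) produces an exact sequence
\[
0 \longrightarrow H^n(\M(n-1)) \longrightarrow H^n(\M(n)) \longrightarrow V^n \xrightarrow{[d]} H^{n+1}(\M(n-1))
\]
and similarly for $\M'$. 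Applying the five-lemma to the ladder induced by $\varphi$, using that $\varphi$ is an isomorphism on $\M(n-1)$ and that $H^n(\varphi), H^{n+1}(\varphi)$ are isomorphisms, yields that $\varphi_* \colon V^n \to V'^n$ is an isomorphism, which closes the induction.

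The main obstacle I anticipate is justifying that the comparison maps truly fit into a ladder: because $\varphi(V^n)$ does not a priori land in $V'^n$ but only in some subspace of $\M'$ projecting isomorphically onto $V'^n$ modulo decomposables, the ladder is set up only at the level of indecomposables and of $H^*(\M(n))$ versus $H^*(\M'(n))$, which in turn requires checking that $\varphi$ sends the filtration $\M(k)$ into $\M'(k)$ (true once the inductive step has been verified, since then $\varphi_*$ is an iso on $V^{\leq n-1}$, so products of such generators map into products of generators in $V'^{\leq n-1}$). The rest is a careful but routine diagram chase.
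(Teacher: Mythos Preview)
The paper does not give its own proof of this lemma; it simply cites Lemma~11.7 of Griffiths--Morgan, so there is nothing in the paper to compare your argument against directly.

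Your inductive strategy via Hirsch extensions is a standard and essentially correct route. One point deserves more care: the four-term exact sequence you display is not long enough for the five-lemma. To conclude that $\varphi_*\colon V^n\to V'^n$ is \emph{surjective} (injectivity follows already from your four terms), you must extend the sequence one step further to
\[
0 \to H^n(\M(n{-}1)) \to H^n(\M(n)) \to V^n \xrightarrow{[d]} H^{n+1}(\M(n{-}1)) \to H^{n+1}(\M(n)),
\]
and then verify that the induced map $H^{n+1}(\M(n))\to H^{n+1}(\M'(n))$ is at least injective. This is where the global quasi-isomorphism hypothesis on $H^{n+1}$ actually enters: since generators lie in degrees $\ge 2$, one has $\M(n)^n=\M^n$ and $\M(n)^{n-1}=\M^{n-1}$, so the inclusion $\M(n)\hookrightarrow\M$ induces an \emph{injection} $H^{n+1}(\M(n))\hookrightarrow H^{n+1}(\M)$; combined with the isomorphism $H^{n+1}(\varphi)$ and the analogous injection on the $\M'$ side, this yields the required injectivity on the fifth column. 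With that addition, the five-lemma applies and your induction closes.

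For comparison, the argument in Griffiths--Morgan takes a somewhat different route: one first uses the lifting property of minimal DGAs through quasi-isomorphisms (their Lemma~11.5, stated here as Proposition~\ref{QIsoBij}) to produce a homotopy inverse $\psi$ to $\varphi$, and then observes that homotopic endomorphisms of a minimal DGA induce the same map on indecomposables, so $\varphi_*$ and $\psi_*$ are mutually inverse linear maps on $V$. Your direct approach avoids invoking the lifting lemma and is more self-contained, at the cost of the extra bookkeeping above with the fifth term.
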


\begin{prop}[Lemma 11.5 in \cite{GM}]\label{QIsoBij}
Let $\varphi\colon\B \to\C$ be a quasi-isomorphism of DGAs and let $\M$ be a minimal DGA. Then $\varphi_*\colon [\M,\B]\to[\M,\C]$ is a bijection.
\end{prop}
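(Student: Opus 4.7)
The plan is to exploit the canonical filtration of $\M$ as a minimal DGA. Since $\M$ is free with decomposable differential, one can write $\M = \bigcup_{n\ge 0}\M(n)$, where $\M(n)$ is the subDGA generated by generators of degree $\le n$, so that each inclusion $\M(n)\hookrightarrow\M(n+1)$ is a Hirsch extension $\M(n+1)=\M(n)\otimes \Lambda V_{n+1}$ with $dV_{n+1}\subset \M(n)^{n+2}$ consisting of cocycles. Both surjectivity and injectivity of $\varphi_*$ will be proved by induction along this filtration, using at each step only that $\varphi$ is an isomorphism on cohomology.

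\textbf{Surjectivity.} Given $g\colon\M\to\C$, I would inductively construct $f_n\colon\M(n)\to\B$ together with a homotopy $H_n\colon\M(n)\to\Lambda(t,dt)\otimes\C$ from $\varphi\circ f_n$ to $g|_{\M(n)}$. To extend to a new generator $x\in V_{n+1}$, first observe that $f_n(dx)$ is a cocycle in $\B$ whose image in $\C$ equals $dg(x)$, hence is exact; because $\varphi_*$ is injective on cohomology, there exists $b\in\B^{n+1}$ with $db=f_n(dx)$, and we set $f_{n+1}(x)=b$. Next extend $H_n$ to $x$: we need $c\in(\Lambda(t,dt)\otimes\C)^{n+1}$ with $dc=H_n(dx)$, $c|_{t=0}=\varphi(b)$, and $c|_{t=1}=g(x)$. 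Any candidate $c$ differs from a valid one by a cocycle whose restrictions to $t=0,1$ are prescribed cocycles in $\C$; surjectivity of $\varphi_*$ in the appropriate degree is used to adjust $b$ by a cocycle in $\B$ so that the obstruction becomes a coboundary, allowing the extension.

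\textbf{Injectivity.} Given $f_0,f_1\colon\M\to\B$ and a homotopy $H\colon\M\to\Lambda(t,dt)\otimes\C$ between $\varphi f_0$ and $\varphi f_1$, one must produce $\tilde H\colon\M\to\Lambda(t,dt)\otimes\B$ with $\tilde H|_{t=i}=f_i$ and $(\id\otimes\varphi)\tilde H=H$. Again induct on $\M(n)$: on each new generator $x\in V_{n+1}$, the required $\tilde H(x)\in(\Lambda(t,dt)\otimes\B)^{n+1}$ has prescribed differential $\tilde H(dx)$ (already defined by induction) and prescribed boundary values $f_0(x),f_1(x)$. Existence reduces to a lifting problem along the quasi-isomorphism $\id\otimes\varphi\colon\Lambda(t,dt)\otimes\B\to\Lambda(t,dt)\otimes\C$ (a quasi-isomorphism by a K\"unneth argument) relative to the surjection given by evaluation at $t\in\{0,1\}$. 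The obstruction lives in $H^{n+1}$ of an appropriate relative complex and vanishes because $\varphi_*$ is an isomorphism on cohomology.

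\textbf{Main obstacle.} The delicate point is the coupled nature of the inductive step for surjectivity: one must choose $b\in\B^{n+1}$ lifting $f_n(dx)$ and simultaneously extend the homotopy $H_n$ to $x$. These two choices constrain each other through the chosen primitive of $H_n(dx)$ in $\Lambda(t,dt)\otimes\C$, and reconciling them requires using that $\varphi$ is a quasi-isomorphism in two consecutive degrees: once to guarantee existence of $b$, and once more to adjust $b$ by a cocycle in $\B^{n+1}$ so that $\varphi(b)$ sits in the correct cohomology class relative to the partially defined homotopy. The same two-step adjustment reappears in the injectivity argument when choosing $\tilde H(x)$.
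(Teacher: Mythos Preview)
The paper does not supply its own proof of this proposition; it simply records the statement and cites Lemma~11.5 of Griffiths--Morgan. Your sketch is essentially the standard obstruction-theoretic argument found there (and, in a closely related form, in F\'elix--Halperin--Thomas), using the degree filtration of a simply connected minimal algebra and extending maps and homotopies one Hirsch extension at a time. So there is nothing to compare against beyond noting that your approach is the expected one.

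Two minor imprecisions are worth tightening. In the surjectivity step you write that $\varphi(f_n(dx))$ ``equals'' $dg(x)$, but at that stage you only know they are cohomologous, via the already-constructed homotopy $H_n$ applied to $dx\in\M(n)$; since $dg(x)$ is exact this still yields $[\varphi(f_n(dx))]=0$ and hence, by injectivity of $\varphi_*$ on $H^{n+2}$, the existence of $b$. In the injectivity step, demanding $(\id\otimes\varphi)\circ\tilde H = H$ on the nose is stronger than necessary and may force you to modify $H$ inductively as well; all you actually need is \emph{some} homotopy $\tilde H$ from $f_0$ to $f_1$, so it is cleaner to phrase the inductive step as simultaneously adjusting $H$ and building $\tilde H$. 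With these small corrections the outline is sound and matches the cited source.
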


\begin{cor}[Theorem 11.6 in \cite{GM}]\label{unimm}
Let $\A$ be a DGA and $m\colon\M\to\A$ and $m'\colon\M'\to\A$ be two minimal models for  $\A$. Then, there is up to homotopy just one isomorphism $\varphi\colon\M\to\M'$
such that $m'\circ\varphi\sim m$.
\end{cor}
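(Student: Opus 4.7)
The statement is essentially a formal consequence of the two preceding results (Lemma~\ref{QIsoIso} and Proposition~\ref{QIsoBij}), so the plan is to combine them rather than construct anything by hand.

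First I would use Proposition~\ref{QIsoBij} applied to the quasi-isomorphism $m'\colon\M'\to\A$, which gives a bijection $m'_*\colon[\M,\M']\to[\M,\A]$. The homotopy class of $m\colon\M\to\A$ lies in $[\M,\A]$, so there exists a homomorphism $\varphi\colon\M\to\M'$, unique up to homotopy, satisfying $m'\circ\varphi\sim m$. This already settles both existence and the ``up to homotopy just one'' part of the statement — any two choices $\varphi,\varphi'$ with $m'\circ\varphi\sim m\sim m'\circ\varphi'$ represent the same class in $[\M,\M']$ under the bijection $m'_*$, hence are homotopic.

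It remains to verify that the $\varphi$ produced this way is in fact an isomorphism. For this I would pass to cohomology: the relation $m'\circ\varphi\sim m$ implies $H(m')\circ H(\varphi)=H(m)$. Since $m$ and $m'$ are quasi-isomorphisms, $H(m)$ and $H(m')$ are isomorphisms of graded vector spaces, and therefore $H(\varphi)=H(m')^{-1}\circ H(m)$ is an isomorphism as well. Thus $\varphi\colon\M\to\M'$ is a quasi-isomorphism between minimal DGAs, and Lemma~\ref{QIsoIso} promotes it to an honest isomorphism.

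There is no real obstacle here; the whole point of the preceding two results is to make this corollary immediate. The only mildly delicate point is to remember that homotopies respect composition and descend to cohomology, so that $m'\circ\varphi\sim m$ really does yield $H(m')\circ H(\varphi)=H(m)$ — this is standard and has been used implicitly in the very statements of Lemma~\ref{QIsoIso} and Proposition~\ref{QIsoBij}.
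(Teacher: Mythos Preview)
Your argument is correct and is exactly the intended one: the paper states this result as a corollary of Lemma~\ref{QIsoIso} and Proposition~\ref{QIsoBij} without giving a separate proof, and your combination of these two results is precisely the standard derivation from~\cite{GM}.
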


 The paper by V. Manero and M. M. Buzunáriz \cite{MB} presents
a method to compute a minimal model for a finitely presented DGA up to a specified degree, together with a map that is a quasi-isomorphism up to the given degree. The method works by adding generators one by one. It terminates
if and only if the minimal model is finitely generated up to the given degree. Our intention is to extend the result to a broader set of input objects. The algorithm adjusts the classical result showing minimal model existence; see Proposition \ref{emm}. 

\begin{defn}
 A DGA $\M$ is said to be $n$-minimal if it is minimal and all generators are of degree at most $n$. The DGAs $\A$ and $\B$ are $n$-quasi-isomorphic if there exists a morphism of
DGAs $f\colon\A\to\B$ such that $f^* \colon H^j (\A) \to H^j (\B)$ is an isomorphism for every $j \leq n$ and
 $f^* \colon H^{n+1} (\A) \to H^{n+1} (\B)$ is a monomorphism. An $n$-minimal model of the DGA $\A$ is an $n$-minimal differential algebra
$\M$ together with an $n$-quasi-isomorphism $f\colon\M\to\A$.
\end{defn}

The following theorem extends the paper \cite{MB}; we use similar methods to prove the statement.

\begin{mythm}{A}\label{minMod}
There is an algorithm which, given $n\in\Nbb$ and a simply-connected DGA $\A$ equipped with a strong homotopy equivalence to a cochain complex $C^*\otimes\Qbb$ where $C^*$ is an effective cochain complex of abelian groups with finite dimensional homology, constructs an $n$-minimal model for $\A$.
\end{mythm}

\begin{proof}
We will search for the minimal model in the form of the free algebra $\Lambda V$ on a graded vector space $V$. The subspace of elements of degree $\le k$ will be denoted by $V^{\le k}$.

Note that we can always compute a finite list of representatives of cohomology generators of $\A$ using the cohomology of the effective chain complex $C^*\otimes\Qbb$. Let us denote the components of the provided strong equivalence $\A^*\Longleftrightarrow C^*\otimes\Qbb$ by $f\colon\A^*\to C^*\otimes\Qbb$, $g\colon C^*\otimes\Qbb\to \A^*$ and $h\colon\A^*\to \A^{*-1}$, see Lemma \ref{CHE}.\\

We proceed by induction. Since $\A$ is simply connected, the induction starts at degree 2.
Let us define $V^2=V^{\le 2}$ as a rational vector subspace of $\A$ generated by a finite list of representatives $v_{1},\dots, v_{l}$  of the basis of $H^2(\A)$. Let $m_2\colon (\Lambda V^2,0)\to(\A,d)$ be the extension of the inclusion $V^2\to\A$. Then $H^1(m_2)=0$ is an isomorphism,
$H^2(m_2)$ is an isomorphism, since $V^2\cong H^2(\A)$, and $H^3(m_2)$ is injective since $(\Lambda (V^2)^3$ does not have elements of degree 3. Clearly, $\dim V^2<\infty$.\\

In the induction step, suppose that $m_k\colon (\Lambda V^{\leq k},d)\to (\A,d)$ is constructed such that $H^i(m_k)$  is an isomorphism for $i\leq k$, $H^{k+1}(m_k)$ is a monomorphism and $\dim V^{\leq k}<\infty$. We extend it to $m_{k+1}\colon  (\Lambda V^{\leq {k+1}},d)\to (A,d)$ with the same properties. 

First, we find a finite list of elements $w_p\in \A^{k+1}$ such that
\[H^{k+1}(\A)=\im H^{k+1}(m_k)\oplus\bigoplus_p\Qbb[w_p].\]
We can do it in the following steps:
\begin{itemize}
 \item Compute representatives $u_j^{k+1}\in \A^{k+1}$ of the basis of $H^{k+1}(\A)$ using a strong equivalence between $\A^*$ and $C^*\otimes\Qbb$ with effective homology.  
  \item Compute a finite basis  $g_i^{k+1}$ of $\Qbb$-vector subspaces $\ker d^{k+1}\subseteq(\Lambda V^{\leq k})^{k+1}$.
  \item Since $H^{k+1}(m_k)$ is a monomorphism, the elements $m_k(g_i^{k+1})$ are representatives
  of the basis of $\im H^{k+1}(m_k)\subseteq H^{k+1}(\A)$. We want to express $m_k(g_i^{k+1})$ as linear combinations of $u_j^{k+1}$ modulo $\im d^k$ in $\A^{k+1}$.  This means to find $a_i\in\A^k$ and $\gamma^i_j\in\Qbb$ such that
    \[m_k(g_i^{k+1}) = \sum_j \gamma^i_j u^{k+1}_j + d^k(a_i).\]
    \item As $\A^{k+1}$ could be infinitely generated, we have to transfer computation to the effective cochain complex $C^{*}\otimes\Qbb$
      \[f(m_k(g_i^{k+1})) = \sum_j \gamma^i_j f(u^{k+1}_j) + d^k(f(a_i)).\]
      Write $f(a_i) = \sum_l \epsilon^i_lc_l^{k}$, where $c_l^k\in C^{k}$ are basis elements, and solve the equation for unknowns $\gamma^i_j,\epsilon^i_l\in\Qbb$
            \[f(m_k(g_i^{k+1})) = \sum_j \gamma^i_j f(u^{k+1}_j) + \sum_l \epsilon^i_ld^k(c_l^{k}).\]
            in the finite dimensional space $C^{k+1}\otimes \mathbb Q$.
     \item Go back to $\A^{k+1}$ by applying the map $g$ to the preceding equation 
    \[ gf(m_k(g_i^{k+1})) = \sum_j \gamma^i_j gf(u^{k+1}_j) + \sum_l \epsilon^i_ld^kg(c_l^{k}).\]
    Since $gf=\operatorname{id}+hd^{k+1}+d^kh$, we get
     \begin{align*}
     gf(m_k(g_i^{k+1}))&=m_k(g_i^{k+1})+hd^{k+1}(m_k(g_i^{k+1}))+d^{k}h(m_k(g_i^{k+1}))\\
       &=m_k(g_i^{k+1})+d^{k}h(m_k(g_i^{k+1})).
     \end{align*}
     and 
     \[ gf(u^{k+1}_j) =u^{k+1}_j + hd^{k+1}(u^{k+1}_j) + d^kh(u^{k+1}_j)= u^{k+1}_j  + d^kh(u^{k+1}_j) \]
     that implies
    \[
     m_k(g_i^{k+1})+d^{k}h(m_k(g_i^{k+1}))=\sum_j\gamma^i_j \left(u_j^{k+1}+d^{k}h(u_j^{k+1})\right)+\sum_l \epsilon^i_ld^kg(c_l^{k}).
     \]
     and so    
        \[
     m_k(g_i^{k+1}) = \sum_j \gamma^i_j u^{k+1}_j + d^k\left(\sum_j \gamma^i_j h(u^{k+1}_j )+\sum_l \epsilon^i_lg(c_l^{k})- h(m_k(g_i^{k+1}))\right).
     \]
     \item Thus, we get $[m_k(g^{k+1}_i)]\in H^{k+1}(\A)$ as linear combinations $\sum_j \gamma^i_j [u^{k+1}_j]$. Hence, using cohomology classes $[u^{k+1}_j]$ we can complete the elements $[m_k(g^{k+1}_i)]$ to the basis of $H^{k+1}(\A)$. These $u^{k+1}_j$ are required $w_p$.
     \end{itemize}

Next, we find a finite list of elements $z_p\in \left(\Lambda(V^{\le k})\right)^{k+2}$ such that    
\[\ker H^{k+2}(m_k)=\bigoplus_q \Qbb[z_q]\]
as follows:
\begin{itemize}
        \item Compute a finite list of basis elements $g_i^{k+2}$ of $\ker d^{k+2}\subseteq(\Lambda V^{\leq k})^{k+2}$.
    \item A linear combination $\sum_i\beta_i g_i^{k+2}$ represents an element of $\ker H^{k+2}(m_k)$
     if and only if there is an element $a^{k+1}\in\A^{k+1}$ such that
    \[\sum_i \beta_i m_k(g_i^{k+2})= d^{k+1}(a^{k+1})\in\A^{k+2}\]
    \item We again transfer this equation in unknowns $\beta_i\in\mathbb Q$ and $a^{k+1}\in \A^{k+1}$ to 
    $C^*\otimes\mathbb Q$
    \[\sum_i \beta_i fm_k(g_i^{k+2})= d^{k+1}(f(a^{k+1}))\in C^{k+2}\otimes \Qbb.\]
    Write $f(a^{k+1})=\sum_j \alpha_jc_j^{k+1}$ with basis elements $c^{k+1}_j$ of $C^{k+1}$ and unknowns $\alpha_j\in\Qbb$. Now, the equation 
     \[\sum_i \beta_i fm_k(g_i^{k+2})= \sum_j \alpha_j d^{k+1}(c_j^{k+1})\in C^{k+2}\otimes \mathbb Q\]
     in unknowns $\beta_i$ and $\alpha_j$ is easily solvable. 
     \item Move back to $\A^{k+2}$ by applying the mapping $g$ to above equation
         \[\sum_i \beta_i gf(m_k(g_i^{k+2}))= \sum_j \alpha_j d^{k+1}g(c_j^{k+1})\in \A^{k+2}.\]
     Using again $gf=\operatorname{id}+hd^{k+2}+d^{k+1}h$ we get 
        \[
        \sum_i \beta_i(m_k(g_i^{k+2})+d^{k+1}h(m_k(g_i^{k+2})))= \sum_i \beta_i gf(m_k(g_i^{k+2}))= \sum_j \alpha_j d^{k+1} g(c_j^{k+1})\in\A^{k+2}\]
        which leads to
         \[\sum_i \beta_i m_k(g_i^{k+2}) = d^{k+1}\left(\sum_j \alpha_j g(c_j^{k+1})-\sum_i\beta_ih(m_k(g_i^{k+2}))\right).\] 
         The representatives $z_q\in (\Lambda V^{\le k})^{k+2}$ of the basis of $\ker H^{k+2}(m_k)$ are linear combinations $\sum_i \beta_ig_i^{k+2}$ with the concrete $\beta_i$ calculated in the effective cochain complex $C^{k+2}\otimes\Qbb$. For each of them there is a corresponding element $b_q = \sum_j \alpha_j g(c_j^{k+1})-\sum_i\beta_ih(m_k(g_i^{k+2}))\in \A^{k+1}$ such that $m_k(z_q) = d^{k+1} b_q$. There are only finitely many $z_q$ as we have a linear system over the finite-dimensional vector space
        $C^{k+2}\otimes\mathbb Q$.
 \end{itemize}

At this stage, we have found elements $w_p$ and $b_q$ with which we can continue the inductive construction of a minimal model. Define $V^{k+1}$ as the vector space of degree $k+1$ generated by the basis $\{w'_p,b'_q\}$. Put $V^{\leq k+1}=V^{\leq k}\oplus V^{ {k+1}}$ and extend $m_k$ to the morphism $m_{k+1}:V^{\le k+1}\to \A$  
\[m_{k+1}w'_p=w_p,\quad  m_{k+1}b'_q=b_q.\]
Extend $d$ to a derivation in $\Lambda V^{\leq k+1}$ 
\[dw'_p=0,\quad db'_q=z_q .\]
By construction, $d^2=0$ in both $V^{k+1}$ and $\Lambda V^{\leq k}$, thus $d^2=0$ in $\Lambda V^{k+1}$. Similarly, $m_{k+1}d=dm_{k+1}$ in $V^{k+1}$  and in $\Lambda V^{\leq k}$, and so $m_{k+1}d=dm_{k+1}$ in $\Lambda V^{\leq k+1}$. We added only a finite number of generators, so we have $\dim V^{k+1}<\infty$ again.

By induction $H^i(m_{k+1})=H^i(m_k)$ is an isomorphism for $i\le k$. We have chosen $[m_k(g_i^{k+1})]$ and [$w_p$] to form a basis of $H^{k+1}(\A)$. Since $d(w'_p)= 0$,   a basis of $H^{k+1}(\Lambda V^{\le k+1})$ is determined by elements $[g_i^{k+1}]$ and [$w'_p$].  Hence $H^{k+1}(m_{k+1})$ is an isomorphism. 

Finally, $\ker H^{k+2}(m_{k+1})$ is generated by elements $[z_q]$. Since 
\[m_{k+1}(z_q)=m_{k+1}(db'_q)=dm_{k+1}(b'_q),\]
the kernel is zero and $H^{k+2}(m_{k+1})$ is a monomorphism.
\\

We terminate our construction at the required degree of $n$.
\end{proof}

\section{Algebraic groups and automorphisms of DGAs}
Can we algorithmically decide whether two minimal models are isomorphic? In solving this problem, the fact that the automorphisms of the minimal model form an explicitly given algebraic group plays an important role.

\begin{defn}
A subgroup $G$ of $\GL(n,\Cbb)$ is an algebraic matrix group of degree $n$ defined over $\Qbb$ if it is the set of common zeros in $\GL(n,\Cbb)$ of finitely many polynomials $p_1,\dots, p_k\in\Qbb[x_{11},x_{12},\dots,x_{nn}]$ where $x_{11}, x_{12},\dots, x_{nn}$ represent matrix entries.  In short, we will call the group a $\Qbb$-group of degree $n$. Next we define
\[ G_{\mathbb Q}=G\cap\GL(n,\mathbb Q).\]
The $\Qbb$-group $G$ is given explicitly if the polynomials $p_1,\dots, p_k$ are explicitly given.
\end{defn}

\begin{defn}
Let $G$ be a $\Qbb$-group and $W\subseteq\Cbb^m$ be a vector subspace.
A homomorphism $\rho\colon G\to GL(W)$ is a {\it rational linear representation}
if the components of $\rho(g)\in GL(W)$ 
are rational functions in the entries of the matrix $g$. 

The homomorphism $\rho$ determines the right action of the group $G$ on $W$, $w\longmapsto w\cdot \rho(g)$, and moreover, for $w\in W\cap \mathbb Q^m$ and $g\in G_{\mathbb Q}$ the vector $w\cdot\rho(g)\in \mathbb Q^m$.

We say that the rational linear representation $\rho$ is {\it explicitly given} if
there is an effective procedure which for each $w\in W$  and each $g\in G$ produces all components of $w\cdot \rho(g)\in W$. 
\end{defn}

For our purposes, we will need the following result by R.~A.~Sarkisyan in the special case
of the algebraic number field $\Qbb$.

\begin{thm}[Theorem 4.1 in \cite{RAS}]\label{algC}
There exists an algorithm which for any explicitly given $\Qbb$-group $G$, any explicitly defined right representation $\rho\colon G\to\GL(m,\Cbb)$ and any two fixed vectors $x,y\in \Qbb^m$ verifies the existence of an element $g\in G_\Qbb$ with the property
\[ x\cdot\rho(g)=y\]
and explicitly finds some such element $g\in G_\Qbb$.
\end{thm}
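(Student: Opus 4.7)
The plan is to approach this as an effective orbit--stabilizer problem with an arithmetic descent to $\Qbb$. First I would make the variety explicit by writing down
\[ V = \{g\in\GL(m,\Cbb) : x\cdot\rho(g)=y\}\cap G. \]
Since $\rho$ is given by rational functions in the matrix entries and $x,y$ have rational coordinates, $V$ is cut out inside $\GL(m,\Cbb)$ by finitely many explicit polynomials with coefficients in $\Qbb$ (the equations for $G$ together with the components of $x\cdot\rho(g)-y$). The original problem thus becomes: decide whether the affine $\Qbb$-variety $V$ has a $\Qbb$-point, and if so, produce one.

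Next I would separate the geometric question from the arithmetic one. Deciding whether $V(\Cbb)$ is nonempty is handled by classical elimination theory over an algebraically closed field, e.g.\ by computing a Gr\"obner basis of the defining ideal and testing whether $1$ lies in it; if $V(\Cbb)=\emptyset$ the algorithm rejects. Otherwise the variety $V$ is a right coset $g_0 H$ for some $g_0\in V(\overline{\Qbb})$, where
\[ H = \{g\in G : y\cdot\rho(g)=y\} \]
is a closed $\Qbb$-subgroup of $G$ whose equations we can write down explicitly (since $y\in\Qbb^m$). Thus $V$ is a $\Qbb$-form of the principal homogeneous space under $H$, and the existence of a $\Qbb$-point is encoded by a class in $H^1(\operatorname{Gal}(\overline{\Qbb}/\Qbb),H)$.

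To descend to $\Qbb$ I would use the structure theory of $\Qbb$-algebraic groups and decompose $H$ via its unipotent radical $U$ and a reductive Levi quotient $L$. For the unipotent part the cohomology $H^1$ vanishes and a $\Qbb$-point can be produced constructively by lifting through the central series of $U$: at each stage the exponential identifies a central factor with its Lie algebra as $\Qbb$-varieties, and the lifting amounts to solving a $\Qbb$-linear system guaranteed to be consistent. For the reductive quotient one invokes an effective version of the Hasse principle together with an effective bound on the height of a $\Qbb$-solution; once a height bound is in hand, a finite search over $G_\Qbb$ of bounded denominator completes the algorithm.

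The hard part is precisely this last descent: trivializing a Galois-cohomology class for a general reductive $\Qbb$-group constructively is not elementary, and this is where Sarkisyan's argument does its deepest work by converting an existence statement about $\Qbb$-rational points into an effective height bound so that the a priori unbounded search over $G_\Qbb$ becomes finite and terminating. Everything preceding this step — writing equations for $V$, computing the stabilizer $H$, and handling the unipotent descent — is routine from effective commutative algebra and linear algebra over $\Qbb$; the entire nontrivial content lives in the arithmetic of reductive groups over $\Qbb$.
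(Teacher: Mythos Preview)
The paper does not prove this theorem at all: it is quoted as Theorem~4.1 of Sarkisyan~\cite{RAS} and used as a black box. The only commentary the paper offers is the remark immediately following the statement, namely that Sarkisyan's original argument in~\cite{RAS} assumed the Hasse principle for $H^1(\Qbb,G)$, and that the methods of Grunewald and Segal~\cite{GS} later allowed him to drop that hypothesis (with details in~\cite{RAS3}).

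Your sketch is a reasonable outline of the strategy and in fact lines up with the paper's remark: you correctly reduce to a principal homogeneous space under the stabilizer $H$, interpret the existence of a $\Qbb$-point via $H^1(\operatorname{Gal}(\overline{\Qbb}/\Qbb),H)$, and dispose of the unipotent radical using vanishing of $H^1$ in characteristic zero. The gap is exactly where you place the ``hard part'': you propose to invoke an effective Hasse principle for the reductive quotient, but the Hasse principle for $H^1$ of linear algebraic groups over $\Qbb$ is \emph{not} valid in general. As the paper's remark makes explicit, assuming it recovers only Sarkisyan's original conditional theorem; the unconditional statement requires the additional input from Grunewald--Segal to bypass the failure of the Hasse principle. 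So your sketch is not wrong as an outline of~\cite{RAS}, but it stops short of the full theorem as stated, and the missing ingredient is precisely the one the paper flags.
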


The author proved this theorem in \cite{RAS} under the assumption that for
$H^1(\mathbb Q,G)$  the Hasse principle holds. The results of F. Grunewald and O. Segal \cite{GS} suggested a method that allowed him to abandon this assumption. More details are available in \cite{RAS3}. 

If we accept the definition of an algebra as a vector space with bilinear multiplication, the previous result gives the following:

\begin{cor}[Theorem 8.1 in \cite{RAS2}] The isomorphism problem for finite-di\-men\-sio\-nal algebras over $\Qbb$ is algorithmically decidable. 
\end{cor}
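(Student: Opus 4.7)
The plan is to encode each $n$-dimensional $\Qbb$-algebra by its vector of structure constants with respect to a chosen basis, and then observe that two algebras are isomorphic if and only if their structure constant vectors lie in the same $\GL(n,\Qbb)$-orbit. This reduces the isomorphism problem to an instance of the orbit-membership problem solved by Theorem \ref{algC}.

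First, given two finite-dimensional $\Qbb$-algebras $A, B$, compute $\dim_\Qbb A$ and $\dim_\Qbb B$; if they differ, output "non-isomorphic". Otherwise, set $n=\dim_\Qbb A=\dim_\Qbb B$, fix the standard basis $e_1,\dots,e_n$ of $\Qbb^n$, and identify $A$ (resp.\ $B$) with $\Qbb^n$ equipped with the multiplication given by structure constants $a^k_{ij}$ (resp.\ $b^k_{ij}$) via $e_i\cdot e_j=\sum_k a^k_{ij}e_k$. The collection of such multiplications forms the vector space $V=\Qbb^{n^3}$, and the inputs become two explicit vectors $x=(a^k_{ij}), y=(b^k_{ij})\in V\cap\Qbb^{n^3}$.

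Next I would exhibit $\GL(n,\Cbb)$ as an explicitly given $\Qbb$-group, for instance as the subvariety of $\Cbb^{n^2+1}$ cut out by the single polynomial relation $\det(M)\cdot t-1=0$ in the matrix entries $M=(g_{ij})$ and an auxiliary coordinate $t$. The natural right action of $g\in \GL(n,\Cbb)$ on $V$ by transport of structure is
\[
(x\cdot\rho(g))^k_{ij}=\sum_{p,q,r}(g^{-1})^k_{\;r}\,x^r_{pq}\,g^p_{\;i}\,g^q_{\;j},
\]
which is polynomial in the entries of $g$ and $g^{-1}$, hence rational in the entries of $g$; moreover $\rho(\GL(n,\Qbb))$ preserves $V\cap\Qbb^{n^3}$. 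Thus $\rho\colon \GL(n,\Cbb)\to \GL(V)$ is an explicitly defined rational linear representation in the sense required by Theorem \ref{algC}.

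By construction, a matrix $g\in\GL(n,\Qbb)$ satisfies $x\cdot\rho(g)=y$ exactly when the linear map $\Qbb^n\to\Qbb^n$ with matrix $g$ intertwines the multiplications of $A$ and $B$, i.e.\ is a $\Qbb$-algebra isomorphism. Applying Theorem \ref{algC} to $(G,\rho,x,y)$ therefore decides isomorphism of $A$ and $B$ and, in the affirmative case, even produces an explicit isomorphism. The main conceptual step is this encoding; the only technical point to check is that $\rho$ meets the "explicitly given" clause, which is immediate from the displayed formula since the entries of $g^{-1}$ are rational functions (with denominator $\det g$) in the entries of $g$. No variant of Theorem \ref{algC} beyond the statement cited is needed, so no further obstacle arises.
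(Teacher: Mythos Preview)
Your proposal is correct and follows essentially the same approach the paper intends: the paper states this corollary as an immediate consequence of Theorem~\ref{algC} (``If we accept the definition of an algebra as a vector space with bilinear multiplication, the previous result gives the following''), and the detailed argument you give---encoding the multiplication by structure constants in $\Cbb^{n^3}$ and checking whether two such vectors lie in the same $\GL(n,\Qbb)$-orbit under the transport-of-structure representation---is precisely the specialization of the paper's proof of Theorem~B to the case without grading or differential. One cosmetic remark: in the paper's setup a $\Qbb$-group is by definition a Zariski-closed subgroup of some $\GL(n,\Cbb)$, so $\GL(n,\Cbb)$ itself is trivially an explicitly given $\Qbb$-group (empty set of defining polynomials) and your detour through $\Cbb^{n^2+1}$ with the auxiliary coordinate $t$ is unnecessary.
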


In a similar way, we use Sarkisyan's theorem to prove:

\begin{mythm} B\label{IsoMod}
There is an algorithm that decides whether two finitely generated differential graded algebras $\M$ and $\N$ over $\Qbb$ are isomorphic. If they are isomorphic, the algorithm computes an isomorphism.
\end{mythm}

\begin{proof}
The key idea of this proof is to consider the automorphism group of the
underlying graded vector space of $\M$ and $\N$ as an algebraic group acting on the space of
all possible multiplication and differential maps and then use Sarkisyan's
algorithm to decide whether the two DGA structures lie in the same orbit of this action.

Suppose that all generators of both algebras are in degrees at most \(D\).
Choose bases of homogeneous components and identify
\[
\mathcal M^p \cong  \mathbb Q^{k_p},
\qquad
\mathcal N^p \cong  \mathbb Q^{k_p},
\]
for all \(p\le 2D\).
If the dimensions do not agree, then the algebras cannot be isomorphic.

For every \(0\le i,j\le D\), let
\[
m_{i,j}\colon
\mathcal M^i\otimes \mathcal M^j \to \mathcal M^{i+j},
\qquad
n_{i,j}\colon
\mathcal N^i\otimes \mathcal N^j \to \mathcal N^{i+j}
\]
be the multiplication maps, and for every \(0\le p<2D\), let
\[
d_{\mathcal M}^p\colon \mathcal M^p\to \mathcal M^{p+1},
\qquad
d_{\mathcal N}^p\colon \mathcal N^p\to \mathcal N^{p+1}
\]
be the differentials.

Consider the vector space
\begin{align*}
V&=
\prod_{0\le i,j\le D}
\operatorname{Hom}(\Cbb^{k_i}\otimes\Cbb^{k_j},\Cbb^{k_{i+j}})
\times\prod_{0\le p<2D}\operatorname{Hom}(\Cbb^{k_p},\Cbb^{k_{p+1}})\\
&\cong \prod_{0\le i,j\le D}\Cbb^{k_i\cdot k_j\cdot k_{o+j}}\times\prod_{0\le p<2D} \Cbb^{k_p\cdot k_{p+1}}.   
\end{align*}

Writing all maps in the chosen bases, we define

\[
x=
\left(
\prod_{0\le i,j\le D} m_{i,j},
 \prod_{0\le p<2D} d_{\mathcal M}^p
\right)
\in V
\]
and
\[
y=
\left(
\prod_{0\le i,j\le D} n_{i,j},
\prod_{0\le p<2D} d_{\mathcal N}^p
\right)
\in V.
\]

Consider the algebraic group
\[
G=\prod_{p=0}^{2D}GL(\Cbb^{k_p}).
\]

An element
\[
g=(g^0,g^1,\ldots,g^{2D})\in G
\]
acts on \(V\) by
\[
m_{i,j}\longmapsto
(g^{i+j})^{-1}\circ m_{i,j}\circ
(g^i\otimes g^j)
\]
and
\[
d_{\mathcal M}^p\longmapsto
(g^{p+1})^{-1}\circ d_{\mathcal M}^p\circ g^p.
\]

A tuple
\[
(g^0,\ldots,g^{2D})\in G
\]
determines an isomorphism of differential graded algebras
\(\mathcal M\to\mathcal N\) exactly when
\[
n_{i,j}\circ(g^i\otimes g^j)
=
g^{i+j}\circ m_{i,j}
\]
for all \(0\le i,j\le D\), and
\[
g^{p+1}\circ d_{\mathcal M}^p
=
d_{\mathcal N}^p\circ g^p
\]
for all \(0\le p<2D\).

Equivalently, \(x\) and \(y\) lie in the same \(G\)-orbit.

As \(x\) and \(y\) are finite tuples of
rational numbers, and so the above formulas define a rational representation  of
\(G\) on \(V\).

Since this action is a rational representation of an explicitly given
algebraic group, Sarkisyan's algorithm decides whether there exists
\(g\in G(\mathbb Q)\) such that
\[
x\cdot g = y.
\]

If such an element exists, the algorithm computes it explicitly, and its
components \(g^p\) form an isomorphism
\[
\mathcal M \cong \mathcal N.
\]

Therefore the isomorphism problem for finitely generated differential graded
algebras over \(\mathbb Q\) is algorithmically decidable.
\end{proof}

\section{Preliminaries on rational homotopy theory}
Rational homotopy theory studies properties of spaces that are invariant under rational homotopy equivalence. For brevity, the notion of a space stands for a simplicial set or a topological space.
\begin{defn} [\cite{FHT}, Chapter 9(c)]
A continuous map $f\colon X\to Y$ between simply connected topological spaces is a rational homotopy equivalence
if
 one of the three equivalent conditions holds:
\begin{itemize}
    \item[(1)] $\pi_*(f)\otimes\Qbb$ is an isomorphism,
    \item[(2)] $H_*(f;\Qbb)$ is an isomorphism,
    \item[(3)]   $H^*(f;\Qbb)$ is an isomorphism.
\end{itemize}
The fact that these conditions are equivalent is the content of the Serre-Whitehead theorem. For the proof, see Theorem 8.6 in \cite{FHT}.

Let $\widetilde{X}$ and $\widetilde{Y}$ be simplicial sets. A simplicial map $f\colon \widetilde{X}\to\widetilde{Y}$ is a rational homotopy equivalence if $|f|\colon |\widetilde{X}|\to
|\widetilde{Y}|$ is a rational homotopy equivalence (equivalently, condition (2) or (3) is satisfied for $f$). 

We say that simply connected spaces $W$ and $Z$ have the same rational  homotopy type if there is a chain of rational homotopy equivalences 
\[W\leftarrow Z(0)\rightarrow\dots\leftarrow Z(k)\rightarrow Z.\]
\end{defn}

\begin{defn}
A simply connected space $Y$ is a rational space if $\pi_*(Y)$ is a $\Qbb$-module (or equivalently $H_*(Y,\Zbb)$ is a $\Qbb$-module).
A rationalization of a simply connected  space $X$ is a map $\varphi\colon X\to X_\Qbb$ to a simply connected rational space $X_\Qbb$ such that $\varphi$ induces an isomorphism $\pi_*(X)\otimes\Qbb\cong \pi_*(X_\Qbb)$.
\end{defn}

\begin{rem}
A simply connected space can be rationalized by induction using its Postnikov tower. See \cite[Section 8.2]{GM}. 
\end{rem}

The following definition provides a suitable base for an algebraic-simplicial model for simplicial sets and topolo\-gi\-cal spaces.

\begin{defn}\label{Apl}
The graded algebra $(A_{PL})_n$ is the free graded commutative algebra
\begin{align*}
    (A_{PL})_n &=\Lambda(t_0,\dots, t_n, y_0,\dots, y_n)\biggl/\left(\sum_{i} t_i-1,\sum_j y_j\right),\\
    &dt_i=y_i\quad\text{ and }\quad dy_i=0.
\end{align*}
where $t_i$ are elements of degree 0. The elements of this algebra can therefore be understood as polynomial differential forms with rational coefficients on the standard simplex $\Delta^n$.
The subspace $(A_{PL})^p_n$ of elements of degree $p$ is called the subspace of polynomial 
$p$-forms. The simplicial DGA $A_{PL}$ is the functor $\Delta^{\operatorname{op}}\to\operatorname{DGA}$ that acts on objects by the assigment $[n]\longmapsto (A_{PL})_n$ for each $[n]\in\Delta$ and any morphism $f\colon [n]\to[m]$ in $\Delta$ maps to $f^*\colon (A_{PL})_m\to (A_{PL})_n$ such that
\[f^*(t_i)=\sum_{f(j)=i} t_j \quad\text{ for } 0\leq i\leq n.\]
\end{defn}

\begin{rem}
The simplicial DGA $A_{PL}$ coincides with the graded algebra $A_{PL}=\{(A_{PL})_n\}_{n\geq0}$
together with face operators $\partial_i\colon(A_{PL})_{n+1}\to(A_{PL})_{n}$ and degeneracy operators  $s_j\colon(A_{PL})_{n}\to(A_{PL})_{n+1}$  uniquely defined by the conditions
\[\partial_i(t_k)=\begin{cases}
t_k & k<i\\
0 & k=i\\
t_{k-1} & k>i
\end{cases}\quad\quad
s_j(t_k)=
\begin{cases}
t_k & k<j\\
t_k+t_{k+1} & k=j\\
t_{k+1} & k>j
\end{cases}.\]
It is immediate to see that $\partial_i=d_i^*$ and $s_j=\rho_j^*$ where $d_i\colon [n]\to[n+1]$ is a standard face map and $\rho_j\colon [n+1]\to[n]$ is a standard degeneracy map.
\end{rem}

Let $X$ be a simplicial set. Next, we define $A_{PL}(X)$ using the following general construction applicable for any simplicial cochain algebra or simplicial cochain complex $\A$. 

\begin{constr}\label{constr}
Let $\A$ be a simplicial DGA. Then, for every simplicial set $X$, we define
\[\A(X)=\{\A^p(X)\}_{p\geq 0}\] 
as the DGA such that:
\begin{itemize}
    \item $\A^p(X)$ is the set $\Hom_{\operatorname{sSet}}(X,\A^p)$ of simplicial set homomorphisms from $X$ to $\A^p$.
    \item The addition, multiplication, scalar multiplication, and differential of $\varphi,\psi\in \Hom_{\operatorname{sSet}}(X,\A^p)$ computed on $\sigma\in X$ are given by
\[(\varphi+\psi)_\sigma =\varphi_\sigma+\psi_\sigma,\quad(\varphi\cdot\psi)_\sigma =\varphi_\sigma\cdot\psi_\sigma\quad(\lambda\cdot\psi)_\sigma =\lambda\cdot\psi_\sigma,
\quad (d\psi)_\sigma=d\psi_\sigma. \]
\end{itemize}
 A similar construction also applies to simplicial cochain complexes.  
\end{constr}

The only nondegenerate $n$-simplex $c_n=(0,1,\dots,n)$ in $\Delta[n]_n$ is called the fundamental class of $\Delta[n]$.  It turns out that the following proposition holds.

\begin{prop}\label{AplDelta}
Let $\A=\{\mathcal A_n\}$ be a simplicial DGA. For $n\geq 0$, the assignment $\phi \longmapsto \phi_{c_n}$ defines an isomorphism of DGAs $\A(\Delta[n])\to \A_n$.
In particular, $A_{PL}(\Delta[n])$ is isomorphic $ (A_{PL})_n$ for all $n\geq 0$.
\end{prop}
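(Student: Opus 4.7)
The plan is to recognize the statement as an instance of the Yoneda lemma for simplicial sets, followed by a routine verification that the Yoneda bijection respects the DGA structure.

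First I would establish the underlying bijection on each graded piece. Any $k$-simplex $\sigma=(i_0,\dots,i_k)$ of $\Delta[n]$ corresponds to the unique order-preserving map $\sigma\colon[k]\to[n]$ with $\sigma(j)=i_j$, and hence to a simplicial operator $\sigma^*$ on $\Delta[n]$ satisfying $\sigma=\sigma^*(c_n)$. For any simplicial map $\phi\colon\Delta[n]\to\A^p$, naturality forces
\[\phi_\sigma=\sigma^*(\phi_{c_n}),\]
so $\phi$ is determined by its value $\phi_{c_n}\in\A^p_n$. Conversely, given $a\in\A^p_n$, the prescription $\phi_\sigma:=\sigma^*(a)$ defines a simplicial map, using functoriality of the simplicial operators on $\A$. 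These two assignments are mutually inverse, giving a bijection $\A^p(\Delta[n])\cong\A^p_n$.

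Next I would verify that the bijection is an isomorphism of DGAs. By Construction \ref{constr}, the operations on $\A(\Delta[n])$ are defined pointwise on simplices, so evaluation at $c_n$ automatically carries addition, multiplication, scalar multiplication, and the differential in $\A(\Delta[n])$ to the corresponding operations in $\A_n$; concretely, $(\varphi+\psi)_{c_n}=\varphi_{c_n}+\psi_{c_n}$, $(\varphi\cdot\psi)_{c_n}=\varphi_{c_n}\cdot\psi_{c_n}$, $(\lambda\cdot\psi)_{c_n}=\lambda\cdot\psi_{c_n}$, and $(d\psi)_{c_n}=d(\psi_{c_n})$ hold directly from the definitions.

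The specific statement $A_{PL}(\Delta[n])\cong (A_{PL})_n$ follows by specializing $\A$ to $A_{PL}$. There is no real obstacle: the whole argument is Yoneda applied to the representable $\Delta[n]$, packaged together with the observation that the algebraic operations on $\A(\Delta[n])$ are defined levelwise and hence commute with evaluation at the fundamental simplex.
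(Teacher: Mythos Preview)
Your proof is correct: the evaluation-at-$c_n$ map is precisely the Yoneda bijection for the representable $\Delta[n]$, and compatibility with the DGA operations is immediate from the pointwise definitions in Construction~\ref{constr}. The paper does not give its own argument here but simply refers to Proposition~10.4 in \cite{FHT}, whose proof is exactly the Yoneda-type argument you have written out; so you have supplied the details behind the citation rather than taken a different route.
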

\begin{proof}
See Proposition 10.4  in \cite{FHT}.
\end{proof}

In Section 6 we use the fact that $\A(X)$ is a functor covariant in $\A$ and contravariant in $X$.

\begin{defn} A minimal model of a simplicial set $X$ is a minimal model for $A_{PL}(X)$.
If $T$ is a topological space then $A_{PL}(T):=A_{PL}(S_*(T))$ where $S_*(T)$ stands for the simplicial set of singular simplices.
We say that $\M$ is a minimal model for $T$ if $\M$ is a minimal model for $A_{PL}(T)$.
\end{defn}

\begin{rem}
In Section 6 we will see that there is a quasi-isomorphism $A_{PL}(X)\to C^*(X;\mathbb Q)$ of cochain complexes. 
\end{rem}



\subsection*{Rational homotopy theory and DGAs}

The following theorem establishes the equivalence between the homotopy category of rational spaces and the homotopy category of minimal DGAs.

\begin{prop}[Theorem 15.7. in \cite{GM}]\label{bij} 
Let $Y$ be a simply connected rational Kan complex with homotopy groups that are finite-dimensional rational vector spaces, and let $X$ be a simply connected Kan complex. Denote  $\M_X$ and $\M_Y$ associated minimal models of $X$ and $Y$, respectively. Then there is a bijection
 \[ [X,Y] \to [\M_Y,\M_X].\]
Moreover, $f\colon X\to Y$ is a rational homotopy equivalence if and only if the corresponding $\widehat{f}\colon \M_Y\to\M_X$ is an isomorphism. 
\end{prop}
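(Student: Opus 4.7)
The plan is to construct the map using Sullivan's functor $A_{PL}$ and to invert it via the spatial realization functor, exploiting that $Y$ is already rational. Given $f\colon X\to Y$, applying $A_{PL}$ contravariantly yields $A_{PL}(f)\colon A_{PL}(Y)\to A_{PL}(X)$. Let $m_X\colon\M_X\to A_{PL}(X)$ and $m_Y\colon\M_Y\to A_{PL}(Y)$ be the minimal models. By Proposition \ref{QIsoBij}, the map $(m_X)_*\colon [\M_Y,\M_X]\to [\M_Y,A_{PL}(X)]$ is a bijection, so there is a unique homotopy class $\widehat f\colon\M_Y\to\M_X$ satisfying $m_X\circ\widehat f\sim A_{PL}(f)\circ m_Y$. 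Since homotopic maps of spaces induce homotopic maps of $A_{PL}$, this construction descends to homotopy classes and defines $\Phi\colon [X,Y]\to [\M_Y,\M_X]$.

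Next, to invert $\Phi$, I would use Sullivan's spatial realization functor $\langle\cdot\rangle$ from DGAs to simplicial sets, which is right adjoint to $A_{PL}$. The natural unit $\eta_Y\colon Y\to \langle A_{PL}(Y)\rangle$ is a rationalization; because $Y$ is a simply connected rational Kan complex with finite-dimensional rational homotopy groups, $\eta_Y$ is already a weak equivalence, and composing with $\langle m_Y\rangle$ realizes $\M_Y$ as an algebraic model of $Y$ up to weak equivalence. Given a class $\widehat g\colon\M_Y\to\M_X$, applying $\langle\cdot\rangle$ and precomposing with the natural map $X\to\langle A_{PL}(X)\rangle\to\langle\M_X\rangle$, one produces a class in $[X,\langle\M_Y\rangle]\cong[X,Y]$ via the Kan extension property of $Y$; this gives a two-sided inverse to $\Phi$. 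A more hands-on alternative, followed in \cite{GM}, is inductive up the Postnikov tower of $Y$: each stage $Y_n$ corresponds to a Hirsch extension of $\M_{Y_{n-1}}$ whose differential on the new generators encodes the Postnikov invariant $k_{n-1}$, and the bijection $[X,Y_n]\to [\M_{Y_n},\M_X]$ follows by comparing obstruction/extension theory on the two sides using the adjunction and the fact that $\pi_n(Y)$ is a rational vector space.

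For the ``moreover'' clause, $f$ is a rational homotopy equivalence iff $H^*(f;\Qbb)$ is an isomorphism iff $A_{PL}(f)$ is a quasi-isomorphism, using the natural quasi-isomorphism $A_{PL}(-)\to C^*(-;\Qbb)$ mentioned in Section 5. The defining relation $m_X\circ\widehat f\sim A_{PL}(f)\circ m_Y$ combined with the fact that $m_X$ and $m_Y$ are quasi-isomorphisms then forces $\widehat f$ to be a quasi-isomorphism between minimal DGAs, hence by Lemma \ref{QIsoIso} an isomorphism; the converse is immediate from the same square.

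The main obstacle is the bijectivity of $\Phi$, specifically surjectivity: one must geometrically realize an arbitrary algebraic homotopy class $\M_Y\to\M_X$ as a genuine map $X\to Y$. This is where the hypotheses are used essentially, namely that $Y$ is a rational Kan complex with finite-dimensional rational homotopy groups, which ensures both that the adjunction unit on $Y$ is a weak equivalence and that the Postnikov stages of $Y$ admit a stage-by-stage algebraic encoding matching the Hirsch extensions of $\M_Y$.
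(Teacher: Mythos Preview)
Your construction of the map $\Phi$ via $A_{PL}$ and the lift through $m_X$ using Proposition~\ref{QIsoBij}, as well as your argument for the ``moreover'' clause via Lemma~\ref{QIsoIso}, coincide exactly with the paper's proof. The only difference is that the paper does not prove bijectivity at all but simply cites \cite{GM}, Chapter~15, whereas you sketch two routes (spatial realization and the Postnikov-tower induction); the latter is indeed the approach taken in \cite{GM}, so your proposal subsumes the paper's treatment.
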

\begin{proof}
Let's describe the map $[X,Y] \to [\M_Y,\M_X]$. Take a simplicial map $f\colon X\to Y$. Since $m_X$ is a quasi-isomorphism, we can apply Proposition \ref{QIsoBij} on the diagram
 \begin{center}
\begin{tikzpicture}
\matrix (m) [matrix of math nodes, row sep=2em,
column sep=4em, minimum width=2em]
{ 
 & &\M_X\\
 \M_Y & A_{PL}(Y) &  A_{PL}(X)  \\};
  \begin{scope}[every node/.style={scale=.8}]
\path[->](m-2-2) edge node[below] {$f^*$} (m-2-3);
\path[->](m-1-3) edge node[right] {$m_X$} (m-2-3);
\path[->](m-2-1) edge node[below] {$m_Y$} (m-2-2);
\path[->,dashed](m-2-1) edge node[above] {$\widehat{f}$} (m-1-3);
\end{scope}
\end{tikzpicture}
\end{center}
to get a lift $\widehat{f}\colon \M_Y\to \M_X$ unique up to homotopy. 
$f^*$ is a quasi-isomorphism if and only if $\widehat{f}$ is a  quasi-isomorphism and this is equivalent with $\widehat{f}$ being an isomorphism by Lemma \ref{QIsoIso}. 

The proof that the map $[X,Y] \to [\M_Y,\M_X]$ constructed above is a bijection is available in \cite{GM}, Chapter 15. 
\end{proof}

\begin{cor}\label{criterionKan}
Let $X$ and $Y$ be simply connected Kan complexes such that $\pi_i(X)\otimes\Qbb$ and $\pi_i(Y)\otimes\Qbb$ are finite-dimensional for all $i\in\Nbb$. Then $X$ and $Y$ have the same rational homotopy type if and only if  their minimal models are isomorphic  
\[\M_{Y}\cong\M_{X}.\]
\end{cor}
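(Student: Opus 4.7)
My plan is to reduce Corollary \ref{criterionKan} to Proposition \ref{bij} via rationalization, handling the two implications separately. The forward direction (rational equivalence implies isomorphic minimal models) is essentially a formal consequence of what has already been established, while the converse (isomorphic minimal models implies rational equivalence) requires constructing an actual zigzag of space-level maps from a purely algebraic isomorphism, which I anticipate will be the main point of substance.

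For the forward direction, suppose that $X$ and $Y$ have the same rational homotopy type, so that they are linked by a zigzag $X \leftarrow Z(0) \to \cdots \leftarrow Z(k) \to Y$ of rational homotopy equivalences. Each rational homotopy equivalence $f\colon W \to W'$ induces an isomorphism on $H^*(-;\Qbb)$, and since the quasi-isomorphism $A_{PL}(-) \to C^*(-;\Qbb)$ mentioned after Definition \ref{Apl} gives $H^*(A_{PL}(-)) \cong H^*(-;\Qbb)$, the induced map $f^*\colon A_{PL}(W') \to A_{PL}(W)$ is a quasi-isomorphism of DGAs. Given a minimal model $m\colon \M \to A_{PL}(W')$, the composite $f^* \circ m\colon \M \to A_{PL}(W)$ is then also a minimal model, so by Corollary \ref{unimm} the two minimal models associated to $W$ and $W'$ are isomorphic. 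Concatenating these isomorphisms along the zigzag yields $\M_X \cong \M_Y$.

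For the converse, assume $\M_X \cong \M_Y$ and use the finite-dimensionality of $\pi_i(X) \otimes \Qbb$ and $\pi_i(Y) \otimes \Qbb$ to rationalize inductively via Postnikov towers (as recalled in the remark after the definition of rationalization). This produces rationalizations $X \to X_\Qbb$ and $Y \to Y_\Qbb$ where $X_\Qbb$ and $Y_\Qbb$ are simply connected rational Kan complexes with finite-dimensional rational homotopy groups. Since the rationalization maps are themselves rational homotopy equivalences, the forward direction already gives $\M_X \cong \M_{X_\Qbb}$ and $\M_Y \cong \M_{Y_\Qbb}$, hence $\M_{X_\Qbb} \cong \M_{Y_\Qbb}$. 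Now Proposition \ref{bij} applies with $Y_\Qbb$ in the role of the rational target and $X_\Qbb$ in the role of the source: it yields a bijection $[X_\Qbb, Y_\Qbb] \to [\M_{Y_\Qbb}, \M_{X_\Qbb}]$ in which isomorphisms of minimal models correspond precisely to rational homotopy equivalences of spaces. Lifting the fixed isomorphism $\M_{Y_\Qbb} \cong \M_{X_\Qbb}$ through this bijection produces a rational homotopy equivalence $X_\Qbb \to Y_\Qbb$, and assembling everything gives the zigzag $X \to X_\Qbb \to Y_\Qbb \leftarrow Y$ of rational homotopy equivalences, proving that $X$ and $Y$ have the same rational homotopy type.

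The principal subtlety is that Proposition \ref{bij} requires its target to be a rational Kan complex with finite-dimensional rational homotopy, so one cannot apply the bijection directly to $X$ and $Y$; the detour through rationalizations is essential, and this is precisely where the finite-dimensionality hypothesis on $\pi_i(X) \otimes \Qbb$ and $\pi_i(Y) \otimes \Qbb$ earns its keep.
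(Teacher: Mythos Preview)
Your proof is correct and close in spirit to the paper's, but the two directions differ in interesting ways.

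For the direction ``same rational homotopy type $\Rightarrow$ isomorphic minimal models'', your argument is more elementary than the paper's: you observe directly that a rational homotopy equivalence $f\colon W\to W'$ makes $f^*\colon A_{PL}(W')\to A_{PL}(W)$ a quasi-isomorphism, so composing with a minimal model $m\colon\M_{W'}\to A_{PL}(W')$ exhibits $\M_{W'}$ as a minimal model for $W$ as well, and uniqueness (Corollary~\ref{unimm}) finishes. The paper instead routes this direction through Proposition~\ref{bij} by rationalizing the target of each map in the zigzag and using the bijection $[W,Z_\Qbb]\cong[\M_{Z_\Qbb},\M_W]$ to produce isomorphisms $\M_Z\to\M_W$. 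Your route avoids rationalization entirely here and uses only the uniqueness of minimal models, which is cleaner.

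For the direction ``isomorphic minimal models $\Rightarrow$ same rational homotopy type'', both proofs pass to rationalizations and invoke Proposition~\ref{bij}. The paper rationalizes only $Y$, applies the bijection $[X,Y_\Qbb]\cong[\M_{Y_\Qbb},\M_X]$ with the original $X$ as source, and obtains a two-step zigzag $X\to Y_\Qbb\leftarrow Y$. You rationalize both $X$ and $Y$, obtain a map $X_\Qbb\to Y_\Qbb$, and end with a three-step zigzag $X\to X_\Qbb\to Y_\Qbb\leftarrow Y$. Either works; the paper's is marginally shorter since Proposition~\ref{bij} only requires the \emph{target} to be rational.
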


\begin{proof}
Assume that there is an isomorphism $g\colon \M_{Y}\to \M_{X}$ and consider rationalization $r\colon Y\to Y_\Qbb$ of $Y$  and its associated minimal model $\M_{Y_\Qbb}$. We can use Proposition \ref{bij} since the pairs $(Y,Y_{\Qbb})$ and  $(X,Y_{\Qbb})$ satisfy its assumptions. 
Denote $\widehat{r}\colon \M_{Y_\Qbb}\to\M_{Y}$ the image of $r$ in the bijection 
$[Y,Y_\Qbb]\cong[\M_{Y_\Qbb},\M_{Y}]$ and use the bijection $[X,Y_\Qbb]\cong[\M_{Y_\Qbb},\M_{X}]$ to get the preimage $h\colon X\to Y_\Qbb$ of the isomorphism $g\circ \widehat{r}$. Hence $h$ is a rational homotopy equivalence and the desired chain of rational homotopy equivalences is 
\[X \stackrel{h}{\longrightarrow} Y_\Qbb \stackrel{r}{\longleftarrow} Y.\]
So $X$ and $Y$ have the same rational homotopy type.

For the opposite direction, assume that there is a chain of rational homotopy equivalences 
\[X\leftarrow Z(0)\rightarrow\dots\leftarrow Z(k)\rightarrow Y.\]
$Z(i)$ can be chosen as Kan complexes. Let $f\colon W\to Z$ represent an underlying rational homotopy equivalence from the previous chain. Consider rationalization $r\colon Z\to Z_{\Qbb}$ of the Kan complex $Z$ and map the rational homotopy equivalence $r\circ f$ via bijection $[W,Z_{\Qbb}]\cong[\M_{Z_\Qbb},\M_W]$ to the isomorphism $\widehat{rf}$. Similarly, the isomorphism $\widehat{r}$ is an image of $r$ in the bijection $[Z,Z_{\Qbb}]\cong[\M_{Z_\Qbb},\M_Z]$. By composition we obtain the isomorphism $(\widehat{rf})(\widehat{r})^{-1}\colon \M_Z\to\M_W$. The required isomorphism $\M_{Y}\to\M_{X}$ is a composition of such isomorphisms or their inverses.    
\end{proof}

\begin{cor}\label{RHT}
Let $X$ and $Y$ be simply connected Kan complexes such that $\pi_i(X)\otimes\Qbb$ and $\pi_i(Y)\otimes\Qbb$ are finite dimensional rational vector spaces for all $i\in\Nbb$. Then $X$ and $Y$ have the same rational homotopy type if and only if  there is a rational homotopy equivalence $X_\Qbb\to Y_\Qbb$. 
\end{cor}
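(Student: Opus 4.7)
The plan is to reduce both conditions to the single algebraic statement $\M_X\cong\M_Y$ by combining Corollary \ref{criterionKan} with Proposition \ref{bij}, using the observation that a rationalization is itself a rational homotopy equivalence. The key preliminary remark is that the rationalization $r_X\colon X\to X_\Qbb$ induces an isomorphism $\pi_*(X)\otimes\Qbb\to\pi_*(X_\Qbb)$ by definition, hence is a rational homotopy equivalence; after replacing $X_\Qbb$ by a weakly equivalent Kan complex if necessary, the induced quasi-isomorphism on $A_{PL}$ gives (by Corollary \ref{unimm}) an isomorphism $\M_{X_\Qbb}\cong\M_X$, and similarly $\M_{Y_\Qbb}\cong\M_Y$. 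In particular both $X_\Qbb$ and $Y_\Qbb$ satisfy the hypotheses of Proposition \ref{bij}.

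For the forward direction, I would assume that $X$ and $Y$ have the same rational homotopy type and apply Corollary \ref{criterionKan} to obtain $\M_X\cong\M_Y$, hence $\M_{X_\Qbb}\cong\M_{Y_\Qbb}$. Applying Proposition \ref{bij} to the pair $(X_\Qbb,Y_\Qbb)$ yields the bijection $[X_\Qbb,Y_\Qbb]\to[\M_{Y_\Qbb},\M_{X_\Qbb}]$; the preimage of the isomorphism of minimal models is then a rational homotopy equivalence $h\colon X_\Qbb\to Y_\Qbb$, again by the second clause of Proposition \ref{bij}.

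For the backward direction, given a rational homotopy equivalence $h\colon X_\Qbb\to Y_\Qbb$, I would exhibit the chain
\[X\xrightarrow{r_X} X_\Qbb\xrightarrow{h} Y_\Qbb\xleftarrow{r_Y} Y,\]
every arrow of which is a rational homotopy equivalence (the outer two by the definition of rationalization, the middle one by assumption). By definition of "same rational homotopy type," this concludes the direction.

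The main obstacle I anticipate is not algebraic but foundational: ensuring that the rationalizations $X_\Qbb$ and $Y_\Qbb$ can be chosen as simply connected Kan complexes with finite-dimensional rational homotopy groups, so that Proposition \ref{bij} genuinely applies. This is guaranteed by the standard Postnikov-tower construction of rationalization referenced in the remark after Definition 5.3, together with the hypothesis that $\pi_i(X)\otimes\Qbb$ and $\pi_i(Y)\otimes\Qbb$ are finite-dimensional, so it should amount only to citing the construction rather than to new work.
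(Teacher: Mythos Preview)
Your proposal is correct and follows essentially the same route as the paper: both directions are handled exactly as you describe, with the backward direction via the chain $X\to X_\Qbb\to Y_\Qbb\leftarrow Y$ and the forward direction via Corollary~\ref{criterionKan} to obtain $\M_X\cong\M_Y$, then transporting to $\M_{X_\Qbb}\cong\M_{Y_\Qbb}$ and invoking Proposition~\ref{bij}. The only cosmetic difference is that the paper obtains the identifications $\M_{X_\Qbb}\cong\M_X$ and $\M_{Y_\Qbb}\cong\M_Y$ by applying Proposition~\ref{bij} directly to the rationalization maps $r_X$ and $r_Y$ (viewed as maps into rational targets), rather than going through $A_{PL}$ and Corollary~\ref{unimm} as you do; both are valid and amount to the same thing.
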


\begin{proof} If there is a rational homotopy equivalence $f:X_\Qbb\to Y_\Qbb$ then the chain of maps with rationalizations $r_X$ and $r_Y$
$$X\stackrel{r_X}\longrightarrow X_\Qbb\stackrel{f}\longrightarrow Y_\Qbb\stackrel{r_Y}\longleftarrow Y$$
determines the same homotopy type of $X$ and $Y$.

Let $X$ and $Y$ have the same rational homotopy type. According to Corollary \ref{criterionKan}
there is an isomorphism $g\colon \M_Y\to \M_X$. Due to Proposition \ref{bij} rational homotopy equivalences $r_X:X\to X_\Qbb$ and $r_Y\colon Y\to Y_\Qbb$ induce isomorphisms $\widehat{r}_X\colon\M_{X_\Qbb}\to\M_X$ and $\widehat{r}_Y\colon \M_{Y_\Qbb}\to\M_Y$. Then
\[ \M_{Y_{\Qbb}}\stackrel{\widehat{r}_Y}\longrightarrow \M_Y\stackrel{g}\longrightarrow \M_X\stackrel{{\widehat{r}_X}^{-1}}\longrightarrow \M_{X_\Qbb}  \]
is an isomorphism that induces a rational homotopy equivalence $f\colon X_\Qbb\to Y_\Qbb$ according to Proposition \ref{bij}.
  
\end{proof}

\begin{prop}\label{criterion}
Let $X$ and $Y$ be simply connected finite simplicial sets of dimensions $\le d$ with standard Postnikov towers $\{X_{n}, p_n^X,\varphi_n^X\}$ and $\{Y_{n},p_n^Y,\varphi_n^Y \}$, respectively. Then $|X|$ and $|Y|$ have the same rational homotopy type if and only if  the minimal models  of the Postnikov stages $Y_d$ and $X_d$ are isomorphic  
\[\M_{Y_d}\cong\M_{X_d}.\]
\end{prop}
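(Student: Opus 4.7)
The plan is to reduce the statement to a question about minimal model isomorphisms via Corollary~\ref{criterionKan}, and then to relate the full minimal model $\M_X$ to the Postnikov stage's minimal model $\M_{X_d}$ (similarly for $Y$). Applying Corollary~\ref{criterionKan} to the Kan replacements $S_*(|X|)$ and $S_*(|Y|)$ (whose rational homotopy groups are finite-dimensional in each degree, since $X,Y$ are finite), the assertion $|X|\sim_\Qbb|Y|$ becomes $\M_X\cong\M_Y$. The key auxiliary fact --- which I would draw from the preceding Section~5 discussion of Hirsch extensions and Postnikov stages --- is that $\M_{X_d}$ is isomorphic to the sub-DGA $\Lambda V_X^{\le d}\subset\M_X$ generated by the generators in degrees $\le d$, i.e.\ the $d$-minimal model of $X$. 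The reason is that $\varphi_d^*\colon A_{PL}(X_d)\to A_{PL}(X)$ is a $d$-quasi-isomorphism: it is an iso on $H^{\le d}$ because the rational homotopy fiber is $d$-connected, and $H^{d+1}(X_d)=0$ follows from the Serre spectral sequence together with $H^{d+1}(X)=0$. Lifting to minimal models and invoking uniqueness of $d$-minimal models yields the identification. Hence it suffices to show $\M_X\cong\M_Y$ iff $\Lambda V_X^{\le d}\cong\Lambda V_Y^{\le d}$.

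The forward direction is straightforward: any DGA iso $\phi\colon\M_X\to\M_Y$ preserves the filtration by generator degree, because decomposables in degree $k$ involve only generators of degrees $\le k-2$; consequently $\phi$ restricts to an iso $\Lambda V_X^{\le d}\to\Lambda V_Y^{\le d}$.

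For the reverse direction --- the main content --- I would extend an iso $f\colon\Lambda V_X^{\le d}\to\Lambda V_Y^{\le d}$ inductively in generator degree to an iso $\tilde f\colon\M_X\to\M_Y$. The dimension hypothesis $\dim X,\dim Y\le d$ forces $H^k(A_{PL}(X))=H^k(A_{PL}(Y))=0$ for $k>d$, so by the construction in Theorem~\ref{minMod}, stages of degree $>d$ add only ``killing'' generators $b'_q$ of degree $k$ with $db'_q=z_q$, where $\{z_q\}$ is a basis of $H^{k+1}(\M_X^{\le k-1})$. Inductively assuming $\tilde f\colon\M_X^{\le k-1}\to\M_Y^{\le k-1}$ is a DGA iso, the images $\tilde f(z_q)$ are cocycles whose cohomology classes form a basis of $H^{k+1}(\M_Y^{\le k-1})$; the generators $\{b''_r\}$ of $\M_Y$ at stage $k$ satisfy $db''_r=z'_r$ for some basis $\{z'_r\}$ of the same cohomology, whence $\tilde f(z_q)=\sum_r A_{qr}z'_r+d\beta_q$ for an invertible matrix $A$ and correction terms $\beta_q\in(\M_Y^{\le k-1})^k$. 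Setting $\tilde f(b'_q):=\sum_r A_{qr}b''_r+\beta_q$ extends $\tilde f$ to a DGA iso at stage $k$, completing the induction step.

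The main obstacle is this inductive extension in the reverse direction. It rests on three ingredients: (i)~the identification $\M_{X_d}\cong\Lambda V_X^{\le d}$ carried over from Section~5; (ii)~the rigidity produced by the dimension bound, which limits the new generators above degree $d$ to the killing type; and (iii)~at each stage, solving the cohomological equation $\tilde f(z_q)=\sum_r A_{qr}z'_r+d\beta_q$ in the already-built piece of $\M_Y$ to define $\tilde f$ on the new generators compatibly with the differential. Once these are in hand, the induction proceeds routinely, producing the desired isomorphism $\tilde f\colon\M_X\to\M_Y$.
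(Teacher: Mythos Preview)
Your proposal is correct and takes a genuinely different route from the paper. The paper argues topologically: it shows that $|X|\sim_\Qbb|Y|$ iff $X_d\sim_\Qbb Y_d$ (as Kan complexes), then appeals to Corollary~\ref{criterionKan}. For the forward direction it pushes a zig-zag of rational equivalences down to the $d$-th Postnikov stage using functoriality of Postnikov towers (citing an external result, \cite[Theorem~4.2]{MS2}); for the reverse direction it invokes Corollary~\ref{RHT} to produce a rational equivalence $(X_d)_\Qbb\to(Y_d)_\Qbb$ and then uses classical obstruction theory (the dimension bound $\dim X\le d$ and the fact that $|\varphi_d^Y|$ is a $(d{+}1)$-equivalence) to lift to a map $|X|\to|Y_\Qbb|$, which is then seen to be a rational equivalence because both sides have trivial rational homology above degree $d$. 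Your argument, by contrast, is entirely algebraic: after identifying $\M_{X_d}\cong\M_X(d)$ via Corollary~\ref{ModCor}, you reduce to showing $\M_X\cong\M_Y$ iff $\M_X(d)\cong\M_Y(d)$, and you establish the nontrivial reverse implication by an inductive extension in the minimal model, exploiting that for $k>d$ the map $V_X^k\to H^{k+1}(\Lambda V_X^{\le k-1})$, $v\mapsto[dv]$, is an isomorphism (surjectivity from $H^{k+1}(\Lambda V_X^{\le k})=0$, injectivity from minimality and $H^k(\Lambda V_X^{\le k})=0$). Your approach avoids both the external reference and the obstruction-theoretic lifting, and makes transparent \emph{why} the dimension bound suffices: above degree $d$ the minimal model is canonically reconstructed from its truncation. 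The paper's approach, on the other hand, stays closer to spaces and would generalize more readily to settings where one wants an actual map rather than just an isomorphism of models. One small point: your sketch of $\M_{X_d}\cong\Lambda V_X^{\le d}$ via ``uniqueness of $d$-minimal models'' tacitly uses that $\M_{X_d}$ itself is $d$-minimal (i.e.\ has no generators above degree $d$); this is true because $\pi_{>d}(X_d)\otimes\Qbb=0$, or equivalently is the content of Corollary~\ref{ModCor}, which you already cite.
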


\begin{proof}
 Due to Corollary \ref{criterionKan}, it suffices to prove that $|X|$ and $|Y|$ have the same rational homotopy type if and only if  their Postnikov stages $X_d$ and $Y_d$, which are Kan complexes, also have the same rational homotopy type. 

 Let us denote $X':=\varprojlim X_n$ and $Y':=\varprojlim Y_n$. They are Kan complexes and the induced maps $\varphi^X\colon X\to X'$, $\varphi^Y\colon Y\to Y'$ are weak homotopy equivalences
 and consequently rational homotopy equivalences. That is why $|X|$ and $|Y|$ have the same homotopy type if and only if $X'$ and $Y'$ also have it.

If  $X'$ and $Y'$ have the same rational homotopy type, there is a chain of rational homotopy equivalences 
\[X'\leftarrow Z(0)\rightarrow\dots\leftarrow Z(k)\rightarrow Y'\] 
with $Z(i)$ Kan complexes.  Let $f\colon W\to Z$ represent an underlying rational homotopy equivalence from the previous chain. Let $\{W_n,p_n^W,\varphi_n^W\}$ and $\{Z_n,p_n^Z,\varphi_n^Z\}$ be standard Postnikov towers of $W$ and $Z$, respectively. According to \cite[Theorem 4.2]{MS2} there is a map $f_d\colon W_d\to Z_d$ such that the diagram
\begin{center}
\begin{tikzpicture}
\matrix (m) [matrix of math nodes, row sep=2em,
column sep=2em, minimum width=2em]
{  W & Z  \\
  W_{d} & Z_{d} \\};
  \begin{scope}[every node/.style={scale=.8}]
\path[->](m-1-1) edge node[above] {$f$} (m-1-2);
\path[->](m-1-2) edge node[right] {$\varphi^{Z}_d$}  (m-2-2);
\path[->](m-1-1) edge node[left] {$\varphi^{W}_d$} (m-2-1);
\path[->](m-2-1) edge node[below] {$f_{d}$} (m-2-2);
\end{scope}
\end{tikzpicture}
\end{center}
commutes. If $f_*\colon \pi_i(W)\otimes \Qbb\to\pi_i(Z)\otimes\Qbb$ is an isomorphism for all $i$, then so is ${f_d}_*\colon \pi_i(W_d)\otimes \Qbb\to\pi_i(Z_d)\otimes\Qbb$. Hence, we obtain the chain of rational homotopy equivalences
\[X_d\leftarrow Z(0)_d\rightarrow\dots\leftarrow Z(k)_d\rightarrow Y_d'.\]
So $X_d$ and $Y_d$ have the same rational homotopy type.

Now suppose that $X_d$ and $Y_d$ have the same rational homotopy type. According to Corollary \ref{RHT} there is a rational homotopy equivalence $f_d\colon (X_d)_\Qbb\to (Y_d)_\Qbb$. Consider the diagram
 \begin{center}
\begin{tikzpicture}
\matrix (m) [matrix of math nodes, row sep=2em,
column sep=4em, minimum width=2em]
{ 
 & &Y_\Qbb\\
 X & (X_d)_\Qbb &  (Y_d)_\Qbb  \\};
  \begin{scope}[every node/.style={scale=.8}]
\path[->](m-2-2) edge node[below] {$f_d$} (m-2-3);
\path[->](m-1-3) edge node[right] {$\varphi_d^Y$} (m-2-3);
\path[->](m-2-1) edge node[below] {$r_{X_d}\circ\varphi_d^X$} (m-2-2);
\path[->,dashed](m-2-1) edge node[above] {} (m-1-3);
\end{scope}
\end{tikzpicture}
\end{center}
where $r_{X_d}\colon X_d\to (X_d)_\Qbb$ is a rationalization of $X_d$.
Since $|X|$ is a CW-complex of dimension $d$ and $|\varphi_d^Y|:|Y_\Qbb|\to |(Y_d)_\Qbb|$ is an isomorphism in homotopy groups $\pi_i$ for $i\le d$ and an epimorphism for $i=d+1$, the induced map 
\[|\varphi_d^Y|_*\colon [|X|,|Y_\Qbb|]\to [|X|,|(Y_d)_\Qbb|]\] 
is a bijection.
Let $F\colon |X|\to |Y_\Qbb|$ be a map which homotopy class corresponds to the homotopy class of $|f_d\circ r_{X_d}\circ\varphi_d^X|\colon
|X|\to |(Y_d)_\Qbb|$. Then $F$ induces isomorphisms in rational homology groups $H_i(\ ;\Qbb)$ for $i\le d$. Since $|X|$ and $|Y|$ have dimensions $\le d$, the homology groups $H_i(X;\Qbb)$ and $H_i(Y_\Qbb;\Qbb)$ are trivial for $i>d$ and hence $F$  induces isomorphism on all homology groups
for all $i$. Therefore we have a chain of rational homotopy equivalences
\[F\colon |X|\stackrel{F}\longrightarrow |Y_\Qbb|\stackrel{|r_Y|}\longleftarrow |Y|\] 
and so $|X|$ and $|Y|$ have the same rational homotopy type.
 \end{proof}

This result leads us to investigate the computability of minimal models for Postnikov stages.


\subsection*{Minimal models for Postnikov stages}
The basic property of minimal algebras is that they are increasing sequences of subalgebras that are related to each other via Hirsch extensions.

Throughout the chapter, we denote the associated chain complex to a rational vector space $V$ located in degree  $n$ as $V[n]$. Denote the dual of $V$  as $V^*$.

\begin{defn}
Let $(\A,\partial)$ be a DGA. A Hirsch extension of $\A$ is a DGA
\[ (\A\otimes_d \Lambda(V[n]), D)\]
where
\begin{itemize}
    \item[(i)] $V$ is a (finite-dimensional) vector space,
    \item[(ii)] $\Lambda(V[n])$ is the free graded-commutative algebra generated by $V$ in degree $n$,
    \item[(iii)] $\A\otimes\Lambda(V[n])$ is an underlying graded algebra with identifications $\A=\A\otimes 1$ and $V=1\otimes V$,
    \item[(iv)] $d\colon V\to \A^{n+1}$ is a homomorphism of vector spaces with $\partial(\im d)=0$,
    \item[(v)] the differential $D$ on $\A\otimes_d \Lambda(V[n])$ is determined by its restrictions \[D|_\A=\partial \ \text{ and }\ D|_{V}=d.\]
\end{itemize}
\end{defn}

The following theorem explains why there is a certain type of duality between principal minimal fibrations and Hirsch extensions.

\begin{thm}[\cite{GM}, Theorem 12.1]\label{mod1}
Let $B$ and $E$ be simplicial sets and let $f\colon E\to B$ be a principal $K(\pi,n)$ Kan fibration with
a Postnikov class $[\kappa]\in H^{n+1}(B;\pi)$ where $\pi$ is an abelian group and $V=\pi\otimes\Qbb$ is a finite-dimensional rational vector space.
Let $\rho_B\colon \M_B\to A_{PL}(B)$ be a  minimal model for $B$. Denote $\M_E=\M_B\otimes_{d}\Lambda(V^*[n])$ the Hirsch extension with a differential $d$ such 
that 
\[[d]\in \Hom(V^*,H^{n+1}(\M_B))\] 
is identified through $\rho_B$  with 
\[[\kappa\otimes 1]\in H^{n+1}(B;\pi)\otimes\Qbb\cong H^{n+1}(B;V).\]
Then, there is a map $\rho_E\colon \M_E\to A_{PL}(E)$ which is a  minimal model for $E$ and makes
commutative the following diagram:
\begin{center}
\begin{tikzpicture}
\matrix (m) [matrix of math nodes, row sep=2em,
column sep=2em, minimum width=2em]
{  \M_B & A_{PL}(B)  \\
  \M_E& A_{PL}(E)\\};
  \begin{scope}[every node/.style={scale=.8}]
\path[<-<](m-2-1) edge   (m-1-1);
\path[<-](m-1-2) edge node[below] {$\rho_B$} (m-1-1);
\path[<-](m-2-2) edge node[right] {$f^*$} (m-1-2);
\path[<-](m-2-2) edge node[below] {$\rho_E$}(m-2-1);
\end{scope}
\end{tikzpicture}
\end{center}
\end{thm}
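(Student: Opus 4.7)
The plan is to identify $f\colon E\to B$ with the pullback of the universal path--loop fibration over $K(\pi,n+1)$ along a classifying map $k\colon B\to K(\pi,n+1)$ representing $[\kappa]$, and to use this square together with the acyclicity of the path space to construct $\rho_E$ step by step.

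First, because $H^*(K(\pi,n+1);\Qbb)\cong\Lambda(V^*[n+1])$ as graded commutative algebras, the Sullivan minimal model of $K(\pi,n+1)$ is $\mu\colon(\Lambda(V^*[n+1]),0)\to A_{PL}(K(\pi,n+1))$. Applying Proposition \ref{QIsoBij} to $\rho_B$, the composition $k^*\circ\mu$ lifts through $\rho_B$ to a DGA morphism $\widehat{k}\colon\Lambda(V^*[n+1])\to\M_B$, uniquely up to homotopy. Each cocycle $\widehat{k}(v)\in\M_B^{n+1}$ represents a cohomology class which under $\rho_B$ corresponds to the component of $[\kappa\otimes 1]$ dual to $v$. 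Identifying $V^*[n+1]$ with $V^*[n]$ by a shift, this yields the map $d\colon V^*\to\M_B^{n+1}$ defining the Hirsch extension $\M_E=\M_B\otimes_d\Lambda(V^*[n])$; the homotopy freedom in the lift can be exploited to push each $\widehat{k}(v)$ into $(\M_B^+)^2$, so that $\M_E$ is itself a minimal Sullivan algebra.

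Next, set $\rho_E|_{\M_B}=f^*\circ\rho_B$. For each generator $u\in V^*[n]\subset\M_E^n$ we must produce $\rho_E(u)\in A_{PL}(E)^n$ satisfying $D\rho_E(u)=f^*\rho_B(\widehat{k}(v))$. Because $\rho_B\widehat{k}$ is homotopic to $k^*\mu$, the cocycle $f^*\rho_B\widehat{k}(v)$ is cohomologous in $A_{PL}(E)$ to $(k\circ f)^*\mu(v)$. The pullback square factors $k\circ f$ through the contractible path space $P$, so $(k\circ f)^*\mu(v)$ is exact with a canonical primitive coming from the acyclic relative Sullivan model of $P\to K(\pi,n+1)$; combining this primitive with the chain homotopy witnessing $\rho_B\widehat{k}\sim k^*\mu$ produces an element of $A_{PL}(E)^n$ which we take as $\rho_E(u)$. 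Extending multiplicatively yields a DGA morphism whose compatibility with $f^*$ and $\rho_B$ holds by construction.

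Finally, $\rho_E$ is a quasi-isomorphism by a Zeeman-type comparison. The word-length filtration $F^p\M_E=(\M_B^+)^p\cdot\M_E$ and the Serre filtration of $A_{PL}(E)$ yield spectral sequences converging to $H^*(\M_E)$ and $H^*(E;\Qbb)$ respectively, and $\rho_E$ induces a map between them. On the $E_2$-page this map reduces to the tensor product of $H^*(\rho_B)$ and the natural map $\Lambda(V^*[n])\to H^*(K(\pi,n);\Qbb)$; both are isomorphisms, the first by hypothesis and the second because the rational cohomology ring of $K(\pi,n)$ is $\Lambda(V^*[n])$. The comparison theorem then promotes this to an isomorphism on the abutments, so $\rho_E$ is a minimal model for $E$. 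The main obstacles are Step~3, where the explicit primitive must be produced from the pullback structure, and the arrangement of decomposable representatives in the Hirsch datum needed to keep $\M_E$ minimal.
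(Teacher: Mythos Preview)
Your sketch is correct and follows the standard Griffiths--Morgan argument: classify the fibration by a map into $K(\pi,n+1)$, lift the induced morphism of minimal models through $\rho_B$ via Proposition~\ref{QIsoBij}, extend to $\rho_E$ using the acyclicity of the path space to find primitives, and verify the quasi-isomorphism by a Zeeman comparison of the word-length and Serre spectral sequences. The paper, by contrast, does not attempt to prove the theorem: its ``proof'' consists solely of the chain of identifications
\[
\Hom(V^*,H^{n+1}(\M_B))\cong\Hom(V^*,H^{n+1}(B;\Qbb))\cong\Hom(H_{n+1}(B;\Qbb),V)\cong H^{n+1}(B;V)
\]
explaining how $[d]$ corresponds to $[\kappa\otimes 1]$, and defers everything else to \cite{GM}. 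So you have supplied precisely what the paper omits, along the same lines as the cited source.

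One caveat worth flagging: your assertion that ``the homotopy freedom in the lift can be exploited to push each $\widehat{k}(v)$ into $(\M_B^+)^2$'' is not automatic in full generality. Changing $\widehat{k}$ within its homotopy class alters $\widehat{k}(v)$ only by a coboundary, and in a minimal algebra coboundaries are already decomposable; hence the indecomposable component of $\widehat{k}(v)$ is an invariant of the cohomology class and cannot be killed by such a move. In the Postnikov-tower application (Corollary~\ref{ModCor}) the issue disappears because $\M_B=\M_{X_{n-1}}$ has no generators above degree $n-1$, so $\M_B^{n+1}\subset(\M_B^+)^2$ automatically. This is a subtlety in the theorem statement rather than a defect in your strategy, but you should be aware that minimality of $\M_E$ relies on it.
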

In this case, we say that Kan fibration $E\to B$ and the Hirsch
extension $\M_E$ of  $\M_B$ are dual.

\begin{proof}
Here, we only explain the relation between $[\kappa]$ and $[d]$ if $H^{n+1}(B;\Qbb)$ 
is a rational vector space of finite dimension. 
\begin{align*}
[d]\in &\operatorname{Hom}(V^*,H^{n+1}(\M_B))
\cong \operatorname{Hom}(V^*,H^{n+1}(A_{PL}(B)))
\cong\operatorname{Hom}(V^*,H^{n+1}(B;\Qbb))\\
&\cong\operatorname{Hom}(H_{n+1}(B;\Qbb),V)
\cong H^{n+1}(B;V)\ni [\kappa\otimes 1]
\end{align*}
\end{proof}

A direct consequence of the previous theorem is that by repeating this procedure, we obtain a minimal model for the $n$-th Postnikov stage $X_n$.
As an induction base, we can use the minimal model $(\Lambda(V^*[2]),0)$ for $K(\pi_2(X),2)$ with
\[V=\pi_2(X)\otimes_\Zbb\Qbb.\]

\begin{cor}[\cite{GM}, Corollary 12.2]\label{ModCor}
Suppose that $X$ is a simply connected simplicial set whose rational homology is a finite-dimensional rational vector space in each dimension with a standard Postnikov tower $\{X_n\}$.
Let $\M$ be a  minimal model for $X$ and let $\M(n)$ be a minimal sub-DGA generated by elements of $\M$ 
in degrees $\leq n$. Then, $\M(n)$ is a finitely generated minimal model for $X_n$. In particular, $\M(n)$ is the Hirsch extension of $\M(n-1)$ dual to the principal $K(\pi_n(X),n)$ Kan fibration $X_n\to X_{n-1}$.
\end{cor}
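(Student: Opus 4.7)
The plan is to run an induction on $n$, applying Theorem \ref{mod1} at each step to build a minimal model for the next Postnikov stage as a Hirsch extension, and then to identify the resulting tower with the sub-DGAs $\M(n)\subseteq\M$ via the uniqueness of minimal models (Corollary \ref{unimm}).

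For the base case, since $X$ is simply connected, $X_2\simeq K(\pi_2(X),2)$, so a minimal model for $X_2$ is $(\Lambda(V_2^*[2]),0)$ with $V_2=\pi_2(X)\otimes\Qbb$, finite-dimensional by Hurewicz and the assumption on $H_2(X;\Qbb)$. For the inductive step, suppose a minimal model $\widetilde{\M}(n-1)\to A_{PL}(X_{n-1})$ has been constructed and is finitely generated in each degree. Apply Theorem \ref{mod1} to the principal $K(\pi_n(X),n)$ Kan fibration $X_n\to X_{n-1}$ of the standard Postnikov tower: with $V_n=\pi_n(X)\otimes\Qbb$ and Postnikov class $[\kappa_{n-1}]\in H^{n+1}(X_{n-1};\pi_n(X))$, the theorem yields a Hirsch extension $\widetilde{\M}(n)=\widetilde{\M}(n-1)\otimes_d\Lambda(V_n^*[n])$ that is a minimal model for $X_n$. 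One also has to check that $V_n$ is finite-dimensional: the rational Serre spectral sequence of the fibration, combined with the inductive hypothesis that $H^*(X_{n-1};\Qbb)$ is finite-dimensional in each degree, keeps $H^*(X_n;\Qbb)$ finite-dimensional in each degree, and then the new generators in degree $n$ are controlled by the indecomposables of $\widetilde{\M}(n)$ in that degree.

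To conclude, assemble the tower into $\widetilde{\M}:=\bigcup_n\widetilde{\M}(n)$, which is itself a minimal DGA, and verify that the compatible maps $\widetilde{\M}(n)\to A_{PL}(X_n)\xrightarrow{\varphi_n^*} A_{PL}(X)$ glue to a quasi-isomorphism $\widetilde{\M}\to A_{PL}(X)$, making $\widetilde{\M}$ a minimal model for $X$. Corollary \ref{unimm} then supplies an isomorphism $\varphi\colon\widetilde{\M}\to\M$. Because the sub-DGA generated by generators of degree $\le n$ is a canonical invariant of any minimal DGA (an isomorphism sends indecomposables of a given degree to indecomposables of the same degree), $\varphi$ restricts to an isomorphism $\widetilde{\M}(n)\cong\M(n)$. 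Transporting the Hirsch extension structure through $\varphi$ shows $\M(n)$ is a minimal model for $X_n$ and is the Hirsch extension of $\M(n-1)$ dual to $X_n\to X_{n-1}$.

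The main obstacle will be the colimit step, namely verifying that the map $\widetilde{\M}\to A_{PL}(X)$ assembled from the tower is actually a quasi-isomorphism; this relies on the fact that $\varphi_n^*\colon H^k(X_n;\Qbb)\to H^k(X;\Qbb)$ is an isomorphism for $k\le n$, and has to be maintained in tandem with the finite-dimensionality bookkeeping for $V_n$. These are standard consequences of the standard Postnikov tower, but require careful spectral-sequence tracking to carry out precisely.
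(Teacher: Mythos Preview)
The paper does not supply its own proof of this corollary, deferring instead to \cite{GM} and offering only the short paragraph preceding the statement as an indication: iterate Theorem~\ref{mod1} along the Postnikov tower, starting from the minimal model $(\Lambda(V^*[2]),0)$ of $K(\pi_2(X),2)$. Your plan is a correct fleshing-out of exactly this approach, including the identification step via Corollary~\ref{unimm} needed to pass from the inductively constructed tower $\widetilde{\M}(n)$ to the sub-DGAs $\M(n)$ of the given minimal model $\M$.
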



\section{Algorithmic minimal model  for a simplicial set}
In this section, we show that if $X$ is a simply connected simplicial set with effective homology $C_*^{\ef}(X,\Qbb)$, then $A_{PL}(X)$ satisfies the assumptions of Theorem \ref{minMod}. That means that one can compute an $n$-minimal Sullivan model for such a simplicial set $X$ for every $n\in \Bbb N$.

The aim lies in transporting the effective homology of $C^*_{\ef}(X,\Qbb)$ to $A_{PL}(X)$ through the chain of isomorphisms and reductions of rational cochain complexes.
\begin{center}
\begin{tikzpicture}
\matrix (m) [matrix of math nodes, row sep=2em,
column sep=3em, minimum width=2em]
{ 
C^*_{\ef}(X,\Qbb)  & C^*(X,\Qbb)  & W(X) & A_{PL}(X) \\
};
  \begin{scope}[every node/.style={scale=.8}]
\path[->](m-1-1) edge node[above] {$\Longleftrightarrow$}    (m-1-2);
\path[->](m-1-1) edge node[below] {\circled{0}}    (m-1-2);
\path[->](m-1-2) edge node[above] {$\cong$}   (m-1-3);
\path[->](m-1-2) edge node[below] {\circled{1}}   (m-1-3);
\path[->](m-1-3) edge node[above] {$\Longleftarrow$} (m-1-4);
\path[->](m-1-3) edge node[below] {\circled{2}} (m-1-4);
\end{scope}
\end{tikzpicture}
\end{center}
where $C^*(X,\Qbb)$ is the cochain complex of $X$ and $W(X)$ are Whitney forms on $X$.
The composition of maps $\circled{1}$ and $\circled{2}$ already appeared in Dupont's work \cite{JD2}
and it is known as the explicit simplicial de Rham theorem.


\subsection*{Effective homology for $C^*(X,\Qbb)$}

Assume that we have a strong homotopy equivalence of chain complexes  
$C_*(X)\Longleftrightarrow C_*^{\ef}(X)$. Then 
\[C^*(X,\Qbb) =(C_*(X)\otimes\Qbb)^*\Longleftrightarrow (C_*^{\ef}(X)\otimes\Qbb)^*=C^*_{\ef}(X,\Qbb)\] 
by Lemma \ref{HomEf}.


\subsection*{The isomorphism between $C^*(X,\Qbb)$ and $W(X)$} We will define cochain complexes $C_{PL}(X)$ and $W(X)$ and show that the isomorphism \circled{1} is a composition of two natural isomorphisms
\[C^*(X;\Qbb)\cong C_{PL}(X)\cong W(X).\]

Recall Construction \ref{constr} and Proposition \ref{AplDelta}. Their consequence is that $\mathcal (A_{PL})_n$ are polynomial differential forms on the standard $n$-simplex.
Similarly, $\Delta[n]$ can be substituted in $C^*(-)$ to obtain a simplicial cochain algebra $C_{PL}$ (more details are available in the book \cite{FHT}).

\begin{defn}\label{Cpl} 
The simplicial cochain algebra $C_{PL}=\{(C_{PL})_n\}_{n\geq 0}$ is defined by
\begin{itemize}
\item cochain  algebras $(C_{PL})_n=C^*(\Delta[n];\Qbb)$ for $n\ge 0$,
\item the face $\partial_i=C^*([d_i])$ and degeneracy $s_j=C^*([\rho_j])$ morphisms which are induced by standard inclusions $d_i\colon [n]\to [n+1]$ and standard degeneracy maps $\rho_j\colon [n+1]\to [n]$.
\end{itemize}
Now, Construction \ref{constr} defines the cochain algebra $C_{PL}(X)$.
\end{defn}

\begin{thm}\label{CCpl}
Let $X$ be a simplicial set. Then there is a natural isomorphism $C_{PL}(X)\to C^*(X;\Qbb)$
of cochain algebras.
\end{thm}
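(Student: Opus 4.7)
The plan is to build the isomorphism explicitly, one simplex at a time, by evaluation at the fundamental class, and to invert it by pulling cochains back along simplices. This is a direct application of Proposition \ref{AplDelta} on each $\Delta[n]$, extended to an arbitrary simplicial set $X$ via the Yoneda-type identification $X=\operatorname{colim}_{\Delta[n]\to X}\Delta[n]$. Since both functors $C_{PL}(-)$ and $C^*(-;\Qbb)$ are contravariant and send colimits of simplicial sets to limits of cochain algebras, it suffices to exhibit a natural transformation whose components on standard simplices are isomorphisms, which is precisely what Proposition \ref{AplDelta} provides for the simplicial DGA $\mathcal A=C_{PL}$.

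Concretely, I would define $\Phi_X\colon C_{PL}^p(X)\to C^p(X;\Qbb)$ by sending a simplicial map $\psi\colon X\to C_{PL}^p$ to the cochain $\Phi_X(\psi)(\sigma)=\psi_p(\sigma)(c_p)$ for $\sigma\in X_p$, where $c_p\in C_p(\Delta[p];\Qbb)$ is the fundamental class. The candidate inverse $\Psi_X$ sends $\varphi\in C^p(X;\Qbb)$ to the simplicial map whose component at $\sigma\in X_n$ is the cochain $\Psi_X(\varphi)_n(\sigma)\in C^p(\Delta[n];\Qbb)$ defined by $\Psi_X(\varphi)_n(\sigma)(\tau)=\varphi(\tau^*\sigma)$ for $\tau\in\Delta[n]_p$, where $\tau^*\sigma\in X_p$ is the pullback of $\sigma$ along $\tau\colon[p]\to[n]$. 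The identity $\Phi_X\circ\Psi_X=\id$ is immediate because $c_p$ corresponds to the identity map $[p]\to[p]$. The other composition also equals the identity: since $\psi$ is a simplicial map, one has $\psi_p(\tau^*\sigma)=\tau^*\psi_n(\sigma)$ in $C^p(\Delta[p];\Qbb)$, and pairing this with $c_p$ yields $\psi_n(\sigma)(\tau)$, using that the pushforward of $c_p$ along $\tau\colon\Delta[p]\to\Delta[n]$ is $\tau$ itself. Naturality in $X$ is then routine unwinding of definitions.

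The only substantive task left is to check that $\Phi_X$ is a morphism of cochain algebras, and this is the step I expect to require the most care. Compatibility with addition and scalar multiplication is immediate from the pointwise definition of the operations in Construction \ref{constr}. For the differential, expanding $(d\psi)_{p+1}(\sigma)(c_{p+1})$ and using $\partial c_{p+1}=\sum_i(-1)^i d_i c_{p+1}$ together with the simplicial naturality of $\psi$ reproduces the standard coboundary formula $\sum_i(-1)^i\Phi_X(\psi)(d_i\sigma)$. The product on $(C_{PL})_n=C^*(\Delta[n];\Qbb)$ is the cup product, and evaluating $\Phi_X(\psi_1\cdot\psi_2)$ on a simplex $\sigma\in X_{p+q}$ reduces, via the front-face $[p]\hookrightarrow[p+q]$ and back-face $[q]\hookrightarrow[p+q]$ decomposition of $c_{p+q}$, to $\Phi_X(\psi_1)$ applied to the front face of $\sigma$ times $\Phi_X(\psi_2)$ applied to the back face, which is exactly the Alexander-Whitney formula for $(\Phi_X(\psi_1)\cup\Phi_X(\psi_2))(\sigma)$. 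Thus the only genuinely non-mechanical step is matching these two cup products, and it falls out as soon as one writes down how the two face inclusions act on the fundamental class.
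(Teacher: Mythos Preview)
Your proof is correct and follows essentially the same approach as the paper's: the forward map $\Phi_X(\psi)(\sigma)=\psi_p(\sigma)(c_p)$ and the inverse $\Psi_X(\varphi)_n(\sigma)(\tau)=\varphi(\tau^*\sigma)$ are exactly the maps the paper writes down (the latter being $C^p(\sigma_*)(\varphi)$ in the paper's notation), with the paper citing \cite[Lemma 10.11]{FHT} for the remaining verifications. You are simply more explicit than the paper in checking compatibility with the differential and the cup product, which is fine.
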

\begin{proof}
The proof is due to Watkins (\cite{CW}) and is available in \cite[Lemma 10.11]{FHT}.
From an algorithmic point of view, it is sufficient to prescribe the map; all other details are available in the referenced literature.
For $p\geq 0$, take $\gamma\in C_{PL}^p(X)$ and $\sigma\in X_p$. Then $\gamma_{\sigma}\in C^p(\Delta[p];\Qbb)$
and we can define $g\in C^p(X;\Qbb)$ as
\[ g(\sigma)=\gamma_{\sigma}(c_p).\]
The inverse map $C^p(X;\Qbb)\to C_{PL}(X)$ is defined as follows. Each $\sigma\in X_n$ determines a unique simplicial map $\sigma_*\colon \Delta[n]\to X$ such that $\sigma_*(c_n)=\sigma$. So, the inverse map is
\[g\longmapsto \gamma \text{ such that } \gamma_{\sigma}=C^p(\sigma_*)(g) \text{ for } \sigma\in X_n.\]
\end{proof}

\subsubsection*{Whitney forms}
The Whitney forms originate in the book of Hassler Whitney \cite{HW}, he called them elementary forms. Generally, Whitney forms are differential forms in a simplicial complex $K$ embedded in an affine space $\Rbb^{n+1}$, see \cite{LOHI}. For our purposes, we focus only on Whitney forms in the standard $n$-simplex $\Delta^n$ (see \cite{LL}) and the extension of the definition for every simplicial set via general construction.

\begin{defn}
The Whitney form associated to $f\colon [p] \to [n]$ in $\Delta$ is the $p$-form
\[\omega_f = p!\sum_{i=0}^p (-1)^i t_{f(i)}\d{t_{f(0)}}\dots\widehat{\d{t_{f(i)}}}\dots \d{t_{f(p)}}\in (A_{PL})_n^p.\]
The Whitney $p$-forms $W^p_n$  is a vector subspace in $(A_{PL})_n^p$ spanned by all Whitney forms associated with the morphisms $f\colon [p] \to [n]$ in $\Delta$. The Whitney forms determine the subspace $W_n=\{(W)_n^p\}_{p\geq0}$ of $(A_{PL})_n$.
\end{defn}
If $f$ is not an injective morphism, then the Whitney form associated with $f$ is trivial, that is, $\omega_f=0$, see \cite[Proposition 3.4 (1)]{LL}. 
The vector spaces $W_n$ are finite-dimensional. 

\begin{prop}
The space $W:=\{W_n\}_{n\geq 0}$ of the Whitney forms is a simplicial DG-vector subspace of $A_{PL}$. 
\end{prop}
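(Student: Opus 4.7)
The plan is to verify closure under the simplicial operators $\partial_i, s_j$ and under the differential $d$ separately, since $W$ is only required to form a DG-vector subspace (not a subalgebra). Because every morphism of $\Delta$ factors as a composition of cofaces and codegeneracies, it is enough to handle $\partial_i = d_i^*$ and $s_j = \rho_j^*$.

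For $\partial_i$ applied to $\omega_f$ with $f\colon[p]\to[n+1]$ injective, I would substitute the formulas for $\partial_i(t_k)$ from the remark after Definition~\ref{Apl} into each monomial. If $i \in \operatorname{Im} f$, then every monomial of $\omega_f$ contains either $t_i$ or $dt_i$, both of which are annihilated, so $\partial_i \omega_f = 0$. Otherwise $f$ factors uniquely as $d_i \circ g$ for some $g\colon[p]\to[n]$, and the substitution identifies $\partial_i \omega_f$ with $\omega_g \in W_n$. For $s_j$, expanding $s_j(t_{f(k)}) = \sum_{l \in \rho_j^{-1}(f(k))} t_l$ and distributing yields a sum of \emph{generalized Whitney symbols}
\[\Omega(l_0, \dots, l_p) := p!\sum_{i=0}^p (-1)^i t_{l_i}\, dt_{l_0}\cdots \widehat{dt_{l_i}}\cdots dt_{l_p},\]
indexed by tuples $(l_0, \dots, l_p) \in \prod_k \rho_j^{-1}(f(k))$. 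A direct sign check shows $\Omega$ is alternating in its arguments and vanishes whenever two arguments coincide, so each nonzero summand equals $\pm\omega_h$ for the monotone rearrangement $h$ of its index tuple; hence $s_j \omega_f \in W_{n+1}$.

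For $d$-closure, the Leibniz rule together with $d^2 t_i = 0$ telescopes the formula to
\[d\omega_f = p!\sum_{i=0}^p (-1)^i dt_{f(i)}\, dt_{f(0)}\cdots \widehat{dt_{f(i)}}\cdots dt_{f(p)} = (p+1)!\, dt_{f(0)}\cdots dt_{f(p)},\]
since bringing $dt_{f(i)}$ into the $i$-th slot of the product contributes a compensating sign $(-1)^i$. I would then establish the identity
\[d\omega_f = \sum_{a \in [n]\setminus\operatorname{Im} f} (-1)^{i_a}\, \omega_{g_a},\]
where $g_a\colon[p+1]\to[n]$ is the monotone injection with image $\operatorname{Im} f \cup \{a\}$ and $i_a$ is the position of $a$ in $g_a$. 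Expanding the right-hand side, the summand $k = i_a$ inside each $\omega_{g_a}$ contributes $(p+1)!\, t_a\, dt_{f(0)}\cdots dt_{f(p)}$ after the signs $(-1)^{i_a+i_a}$ cancel; the remaining $k \neq i_a$ terms each carry a factor $dt_a$ and collapse pairwise via $\sum_a dt_a = 0$; and finally the collected $t_a$ contributions combine via $\sum_a t_a = 1$ to produce the claimed $(p+1)!\, dt_{f(0)}\cdots dt_{f(p)}$.

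The main obstacle is the sign bookkeeping in this last identity, in particular verifying that the $k \neq i_a$ contributions cancel cleanly via $\sum_a dt_a = 0$ once those with $a \in \operatorname{Im} f$ are also brought into play. A useful sanity check is that under the putative identification $\omega_f \leftrightarrow f^*$ (made rigorous later via Whitney's isomorphism with $C_{PL}(\Delta[n])$), the formula above specializes to the standard coboundary $df^* = \sum_a (-1)^{i_a} g_a^*$ in $C^*(\Delta[n];\Qbb)$, which confirms both the combinatorial shape of the identity and the sign convention.
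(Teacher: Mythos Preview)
Your argument is correct. The paper does not actually prove this proposition; it simply cites Proposition~3.4 of \cite{LL}. You are thus supplying a direct proof where the paper defers to an external reference.

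All three closure checks are sound. Two minor remarks. First, in the $s_j$ computation the tuples $(l_0,\dots,l_p)$ that arise are already strictly increasing (at most one fiber $\rho_j^{-1}(f(k))$ has two elements, and those are adjacent integers lying strictly between their neighbours), so no rearrangement sign actually occurs; your alternating-$\Omega$ argument is more general than needed here. Second, the cleanest way to carry out the $d$-closure identity is to work with the unsorted symbols $\Omega_a := \Omega(f(0),\dots,f(p),a)$ for \emph{all} $a\in[n]$: these vanish when $a\in\operatorname{Im} f$, the terms with $k\le p$ sum to zero via $\sum_a dt_a=0$, the $k=p+1$ terms assemble to $(-1)^{p+1}(p{+}1)!\,dt_{f(0)}\cdots dt_{f(p)}$ via $\sum_a t_a=1$, and sorting each surviving $\Omega_a$ into $\omega_{g_a}$ contributes the sign $(-1)^{p+1-i_a}$, giving precisely your formula. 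Your sanity check against the simplicial coboundary is exactly right; indeed, once the Whitney map $WH$ of Proposition~\ref{iso} is known to be a chain isomorphism, $d$-closure of $W_n$ follows for free.
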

\begin{proof}
See Proposition 3.4 in \cite{LL}.
\end{proof}
This proposition enables us to apply Construction \ref{constr} on $W$, so we obtain a notion of Whitney forms $W(X)$ for any simplicial set $X$. Proposition \ref{AplDelta} says that $W_n\cong W(\Delta[n])$.\\

To remove trivial Whitney forms, we introduce the set $I(p,n)\subseteq\operatorname{Mor}_{\Delta}([p],[n]])$ of all injective and strictly monotone maps in the category of finite ordinals $\Delta$.\\

\emph{Integration over simplices.} 
Integrals are defined using axiomatic rules derived from the classical formula for integration by parts for the Riemann integral. More details are available in \cite[p. 128]{FHT}.

\begin{defn}
Let $\sigma=(i_0,i_1,\dots,i_k)$ be a nondegenerate $k$-simplex in $\Delta[n]$. Denote $\Delta^\sigma$ the geometric $k$-simplex with vertices $e_{i_0}, e_{i_1},\dots,e_{i_k}$.
The integration map on $\Delta^\sigma$ is the map
\[\int_{\Delta^\sigma}\colon (A_{PL})_n\to\Qbb\]
defined by linearity using the two identities:
\[ \int_{\Delta^\sigma} t_{i_0}^{a_0}t_{i_1}^{a_1}\dots t_{i_k}^{a_k} \d{t_{i_0}}\wedge\dots\wedge\widehat{\d{t_{i_j}}}\wedge\dots\wedge\d{t_{i_k}}
=(-1)^j\frac{a_{0}!\dots a_{k}!}{(a_{0}+\dots+a_{k}+k)!},\]
and
\[ \int_{\Delta^\sigma}\eta = 0 \text{ if } \eta\in (A_{PL})^p_n, \text{ with } p\ne k, \text{ or } \eta=0 \text{ on } \Delta^\sigma.\]

\end{defn}

The first formula explains the role of $p!$ in the definition of Whitney forms $\omega_f$ for $f\colon [p]\to [n]$, because it forces $\int_{\Delta^\sigma} f^*\omega_f=1$ for $\sigma=(f(0),f(1),\dots, f(p))$.

In the next step, our aim is to identify $(C_{PL})_n$ with $W_n$ using the integration map. 
Since $k$-forms are integrable over $k$-simplices, each $k$-form $\omega$ produces a $k$-cochain whose values on the chains are integrals of $\omega$. 
\begin{defn}
The de Rham map $DR\colon W_n^k\to (C_{PL})_n^k$ is a linear map defined by setting:
\[ DR(\omega)\left(\sum_i a_i\sigma_i\right) = \sum_i a_i\int_{\Delta^{\sigma_i}}\omega.\]

\end{defn}
Note that we use the same notation $d$ for coboundary  $C^p(\Delta[n];\Qbb)\to C^{p+1}(\Delta[n];\Qbb)$ and  the exterior derivative of the differential forms. Then, Stokes' theorem implies that 
\[DR(d)=d(DR).\] 

For the opposite map $(C_{PL})_n^k\to W_n^k$, we need to associate with every generator of $C^k(\Delta[n];\Qbb)$ a map $f\colon [k]\to[n]$. As $\Delta[n]$ is a finite simplicial complex, $C^k(\Delta[n];\Qbb)$ is a finite-dimensional vector space, and so $C^k(\Delta[n];\Qbb)\cong C_k(\Delta[n];\Qbb)$, i.e., every nondegenerate $k$ -simplex $\sigma=(i_0,i_1,\dots,i_k)$ uniquely determines the cochain $\widetilde\sigma_{i_0,i_1,\dots,i_k}$ of the dual basis in $C^k(\Delta[n];\Qbb)$ such that $\widetilde\sigma(\sigma)=1$ and $\widetilde\sigma(\tau)=0$
for all $\tau\ne\sigma$.

\begin{defn}
The Whitney map $WH\colon (C_{PL})_n^k\to W_n^k$  is a linear map defined uniquely by setting
\[WH(\widetilde\sigma_{i_0,i_1,\dots,i_k})=\omega_f \text{ such that } f(j)=i_j \text{ for each } j=0,1,\dots,k.\]
\end{defn}

\begin{prop}\label{iso} 
The map $WH:(C_{PL})_n\to W_n$ is an isomorphism such that
\[DR\circ WH=\operatorname{id}_{{(C_{PL})}_n}.\]
\end{prop}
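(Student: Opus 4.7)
I would argue in two pieces: first dimension counting to see $WH$ is a linear map between spaces of the same finite dimension, then the computation $DR\circ WH=\operatorname{id}$, from which injectivity (hence bijectivity) follows.

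First, observe that a basis of $(C_{PL})^k_n=C^k(\Delta[n];\Qbb)$ is given by the dual cochains $\widetilde\sigma_{i_0,\dots,i_k}$, indexed by nondegenerate $k$-simplices, i.e.\ by strictly increasing tuples $0\le i_0<i_1<\dots<i_k\le n$, equivalently by $f\in I(k,n)$. On the other side, $W^k_n$ is spanned by the forms $\omega_f$ for all $f\colon[k]\to[n]$, but $\omega_f=0$ whenever $f$ is not injective (cited from \cite{LL}), so $W^k_n$ is actually spanned by $\{\omega_f:f\in I(k,n)\}$. Thus $WH$ is a well-defined linear map between two finite-dimensional spaces whose spanning indexing sets are in the same bijection $I(k,n)$; if we can show $WH$ is injective, it must be an isomorphism.

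Next, I would establish $DR\circ WH=\operatorname{id}$ by evaluating on the dual basis. Fix $f\in I(k,n)$ and a nondegenerate $k$-simplex $\tau=(j_0,\dots,j_k)$. By definition,
\[
(DR\circ WH)(\widetilde\sigma_{f(0),\dots,f(k)})(\tau)=\int_{\Delta^\tau}\omega_f.
\]
If $\{j_0,\dots,j_k\}\neq\{f(0),\dots,f(k)\}$, then some $f(\ell)$ lies outside $\{j_0,\dots,j_k\}$, so the coordinate $t_{f(\ell)}$ is identically zero on $\Delta^\tau$ and consequently $\d t_{f(\ell)}=0$ on $\Delta^\tau$; every summand of $\omega_f$ vanishes because one of the remaining $\d t_{f(\cdot)}$ factors is zero, so the integral is $0$. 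If $\tau=(f(0),\dots,f(k))$, I apply the axiomatic integration formula to each of the $k+1$ terms of $\omega_f$: the $i$-th term contributes $k!\cdot(-1)^i\cdot(-1)^i\cdot\frac{1}{(k+1)!}=\frac{1}{k+1}$, so the total is $1$. Hence $DR(WH(\widetilde\sigma_{f(0),\dots,f(k)}))=\widetilde\sigma_{f(0),\dots,f(k)}$, which is precisely $DR\circ WH=\operatorname{id}$.

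The identity $DR\circ WH=\operatorname{id}$ forces $WH$ to be injective, and combined with the equal-dimension observation it yields that $WH$ is an isomorphism (and $DR$ is its inverse on each $(C_{PL})^k_n$). The main—though purely arithmetic—obstacle is the sign and factorial bookkeeping in the evaluation $\int_{\Delta^\tau}\omega_f$; once the two signs $(-1)^i$ cancel and the combinatorial factor $k!/(k+1)!$ is summed over $i=0,\dots,k$, everything reduces to $1$. The vanishing for $\tau\neq(f(0),\dots,f(k))$ is the only place where one must argue carefully that all nonzero terms of $\omega_f$ restricted to $\Delta^\tau$ involve a $\d t_m$ with $m\notin\{j_0,\dots,j_k\}$.
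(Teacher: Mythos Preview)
Your argument is correct and in fact supplies the details the paper omits: the paper's own proof is only a citation to \cite{LOHI}, Proposition 4.1, and the sentence immediately preceding the proposition already records your key computation $\int_{\Delta^\sigma}\omega_f=1$ for $\sigma=(f(0),\dots,f(k))$.

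One small slip in the vanishing case $\tau\ne(f(0),\dots,f(k))$: you write that every summand of $\omega_f$ vanishes ``because one of the remaining $\d t_{f(\cdot)}$ factors is zero,'' but for the summand indexed by $i=\ell$ the factor $\d t_{f(\ell)}$ is precisely the one that is hatted out. That summand vanishes instead because its coefficient $t_{f(\ell)}$ is identically zero on $\Delta^\tau$, which you did note. With this adjustment the argument is complete.
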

\begin{proof}
See the proof of Proposition 4.1 in \cite{LOHI}.
\end{proof}

Now, we need to show that the maps $WH$ and $DR$ respect simplicial structures and so they can be considered as isomorphisms between simplicial cochain complexes $W$ and $C_{PL}$.

\begin{thm}
The maps $WH\colon C_{PL}\to W$ and $DR\colon W\to C_{PL}$ are inverse simplicial isomorphisms to each other. 
\end{thm}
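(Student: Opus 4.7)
The plan is to leverage Proposition \ref{iso} and reduce the statement to a single naturality check. Proposition \ref{iso} already gives that at each level $n$ the map $WH_n\colon (C_{PL})_n \to W_n$ is a bijection with $DR_n\circ WH_n = \id$; bijectivity then forces $WH_n\circ DR_n = \id$, so $DR_n = WH_n^{-1}$ at every level. The identity $DR\circ d = d\circ DR$ recorded after the definition of $DR$ (by Stokes' theorem) upgrades these to isomorphisms of cochain complexes levelwise, and the corresponding identity for $WH$ follows by conjugation. Hence the only remaining content is to show that both maps are morphisms of simplicial vector spaces, that is, that they commute with all face and degeneracy operators.

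Because $WH_n$ and $DR_n$ are mutually inverse at each level, it suffices to verify simplicial compatibility for one of them, and I would work with $DR$ since integration makes the argument transparent. Concretely, for any $\alpha\colon[m]\to[n]$ in $\Delta$, any $\omega\in W_n^k$, and any nondegenerate $k$-simplex $\tau=(j_0,\dots,j_k)$ of $\Delta[m]$, unfolding the definitions reduces the desired equality $DR(\alpha^*\omega)(\tau)=\alpha^*\bigl(DR(\omega)\bigr)(\tau)$ to the change-of-variables identity
\[
\int_{\Delta^{\tau}} \alpha^*\omega \;=\; \int_{\Delta^{\alpha\tau}} \omega,
\]
where the right-hand side is interpreted as zero whenever $\alpha\tau=(\alpha(j_0),\dots,\alpha(j_k))$ is degenerate. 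The identity splits into two cases. If $\alpha$ is injective on $\{j_0,\dots,j_k\}$, then $\alpha$ restricts to an affine isomorphism $\Delta^\tau\to\Delta^{\alpha\tau}$ realising the pullback, and the equality becomes a routine change-of-variables calculation, easily verified on monomials using the axiomatic rules recorded in the definition of the integration map. If $\alpha$ identifies two vertices of $\tau$, then $\alpha|_\tau$ factors through a simplex of dimension $s-1<k$, so $\alpha^*\omega$ restricted to $\Delta^\tau$ is the pullback of a polynomial $k$-form from an algebra with only $s-1<k$ independent differentials; such a $k$-form is forced to vanish, matching the zero on the right-hand side. Once $DR$ is known to be simplicial, the corresponding identity $WH\circ\alpha^*=\alpha^*\circ WH$ follows by pre- and post-composing with the levelwise inverses supplied by Proposition \ref{iso}.

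The main obstacle is the degenerate case of the change-of-variables identity. The cleanest route is the factorisation argument just described, which reduces it to the structural observation that in $(A_{PL})_{s-1}$ a $k$-form vanishes whenever $k\ge s$, because there are only $s-1$ linearly independent differentials $dt_i$ available after imposing $\sum dt_i=0$. Everything else amounts to bookkeeping with the defining rules for integration and with the explicit formulas for $\partial_i$ and $s_j$ on $(A_{PL})_n$.
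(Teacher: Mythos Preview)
Your proposal is correct and follows the same structure as the paper's proof: use Proposition~\ref{iso} for the levelwise inverse isomorphisms, invoke Stokes for compatibility with $d$, verify simplicial naturality for $DR$, and then transfer it to $WH$ by inversion. The only difference is that the paper outsources the simplicial compatibility of $DR$ to \cite[Theorem 10.15(i)]{FHT}, whereas you supply a self-contained change-of-variables argument for that step.
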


\begin{proof}
$DR$ and $WH$ are linear isomorphisms according to Proposition \ref{iso}. Since $DR$ is an isomorphism of cochain complexes, its inverse has to be also such an isomorphism.  The fact that $DR$ commutes with face $\partial_i$ and degeneracy operators $s_j$ has been proved in \cite[Theorem 10.15(i)]{FHT}. This again implies that its inverse $WH$ also commutes with these operators.
\end{proof}

\begin{cor}
Let $X$ be a simplicial set. Then
\[C^*(X;\Qbb)\cong C_{PL}^*(X)\cong W^*(X).\]
\end{cor}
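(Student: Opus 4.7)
The statement is essentially a bookkeeping consequence of the two preceding results, so the plan is to assemble them rather than to prove anything substantially new. The first isomorphism $C^*(X;\Qbb)\cong C_{PL}^*(X)$ is exactly the content of Theorem~\ref{CCpl}, stated there with an explicit formula $\gamma\mapsto (\sigma\mapsto \gamma_\sigma(c_p))$ and with an explicit inverse using the classifying maps $\sigma_*\colon\Delta[n]\to X$; I would simply invoke it.

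For the second isomorphism $C_{PL}^*(X)\cong W^*(X)$, I would use functoriality of Construction~\ref{constr} in the simplicial DGA argument. The previous theorem gives mutually inverse simplicial maps $WH\colon C_{PL}\to W$ and $DR\colon W\to C_{PL}$, which commute with the face and degeneracy operators. Applying $\Hom_{\mathrm{sSet}}(X,-)$ levelwise, i.e.\ postcomposing any $\phi\colon X\to (C_{PL})^p$ with $WH^p$ (and similarly with $DR^p$), produces maps
\[
WH_X\colon C_{PL}^*(X)\longrightarrow W^*(X),\qquad DR_X\colon W^*(X)\longrightarrow C_{PL}^*(X),
\]
and because these are induced pointwise by a bijection of sets, they are inverse to one another. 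The fact that $WH$ and $DR$ respect the face and degeneracy maps is exactly what is needed for $WH_X$ and $DR_X$ to land in the subsets of simplicial maps, and the DGA structure is transferred pointwise by Construction~\ref{constr}, so $WH_X$ and $DR_X$ automatically respect addition, the differential, and (in the case of $DR_X$, which is the identity on $C_{PL}$-values of multiplication) the algebra structure on $C_{PL}^*(X)$.

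Composing the two isomorphisms gives the chain $C^*(X;\Qbb)\cong C_{PL}^*(X)\cong W^*(X)$ stated in the corollary. There is no genuine obstacle here: the only thing to be slightly careful about is that Construction~\ref{constr} is applied to a simplicial cochain \emph{complex} (not necessarily an algebra) in the second step, since $W$ is only stated to be a simplicial DG-vector subspace of $A_{PL}$, so the isomorphism $C_{PL}^*(X)\cong W^*(X)$ is a priori an isomorphism of cochain complexes; the algebra structure on $W^*(X)$ is inherited from $A_{PL}(X)$, and one should just note that $WH_X$ intertwines it with the algebra structure on $C_{PL}^*(X)$ because $WH$ itself intertwines them at the level of simplices, as used in the proof of the preceding theorem.
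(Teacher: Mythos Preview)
Your core argument is correct and matches the paper's proof exactly: invoke Theorem~\ref{CCpl} for $C^*(X;\Qbb)\cong C_{PL}^*(X)$, and apply the functor $\Hom_{\mathrm{sSet}}(X,-)$ to the simplicial isomorphisms $WH$ and $DR$ from the preceding theorem to get $C_{PL}^*(X)\cong W^*(X)$.

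However, your final paragraph contains a genuine error. You claim that $W^*(X)$ inherits an algebra structure from $A_{PL}(X)$ and that $WH$ intertwines multiplications ``as used in the proof of the preceding theorem.'' Neither is true. The Whitney forms $W_n$ are \emph{not} closed under the wedge product in $(A_{PL})_n$ (a product $\omega_f\cdot\omega_g$ typically involves quadratic expressions in the barycentric coordinates and is not a linear combination of Whitney forms), which is precisely why the paper states only that $W$ is a simplicial DG-\emph{vector subspace} of $A_{PL}$, not a sub-DGA. Consequently $W(X)$ carries no natural algebra structure, and the preceding theorem neither asserts nor uses that $WH$ is multiplicative. The paper accordingly claims the second isomorphism only ``as DG vector spaces.'' You were right to flag the caveat that Construction~\ref{constr} here applies to a simplicial cochain complex rather than an algebra; you should simply stop there and drop the attempted upgrade to an algebra isomorphism.
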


\begin{proof}
The previous theorem says that $W\cong C_{PL}$ is a simplicial isomorphism, so it also induces $C_{PL}^*(X)\cong W^*(X)$ as DG vector spaces for any simplicial set $X$.
Moreover, Theorem \ref{CCpl} provides $C^*(X;\Qbb)\cong C_{PL}^*(X)$.    
\end{proof}

\subsection*{The reduction of $A_{PL}(X)$ to $W(X)$}
This section describes the maps providing the reduction of $A_{PL}$ to $W$.

\begin{thm}[Dupont, \cite{JD}, Getzler, \cite{EG}] For each $m\ge 0$ consider the inclusion $i_m\colon W_m\to (A_{PL})_m$
and the operator $\pi_m\colon (A_{PL})_m^*\to W_m^*$ 
\[\pi_m(\eta)=\sum_{p=0}^m\sum_{f\in I(p,m)}\left(\int_{\Delta^p}f^*\eta\right)\omega_f.\]
There exist Dupont homotopies 
\[h_{\Delta[m]}\colon (A_{PL})_m^*\to (A_{PL})_m^{*-1}\]  
such that
\begin{itemize}
    \item the operator $\pi_m$ is a simplicial projector onto $W_m$ i. e. $\pi_mi_m=\id$,
    \item  $h_{\Delta[m]}d+dh_{\Delta[m]}=i_m\pi_m-\id$ and $h_{\Delta[m]}$ is a simplicial map,
    \item $h_{\Delta[m]}^2=0$, $\pi_mh_{\Delta[m]}=0$ and $h_{\Delta[m]}i_m=0$.
\end{itemize}
The triple $(\pi_m,i_m,h_{\Delta[m]})$ is a simplicial reduction of DG vector spaces. 
Moreover, the Dupont homotopies are constructable in an algorithmic way.
\end{thm}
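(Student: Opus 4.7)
The plan is to follow the classical Dupont--Getzler construction and then verify that every ingredient is polynomial and rational, hence algorithmic.

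First I would handle the easy part: $\pi_m i_m = \id$. Since $\{\omega_g : g \in I(p,m)\}$ is a basis of $W_m^p$, it suffices to show $\int_{\Delta^p} f^* \omega_g = \delta_{fg}$ for $f,g \in I(p,m)$. If $f = g$ this is exactly the normalization we already noted (the factor $p!$ in the definition of $\omega_f$). If $f \neq g$, then $f^*\omega_g$ has at least one barycentric coordinate $t_{g(i)}$ with $g(i) \notin \operatorname{im} f$, which pulls back to zero, so the integral vanishes. This already establishes the projector property $\pi_m i_m = \id$ and, as a consequence, the side condition $\pi_m h_{\Delta[m]} = 0$ once $h_{\Delta[m]}$ is defined so that $h_{\Delta[m]}$ lands in $\ker \pi_m$.

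Next I would construct the homotopy $h_{\Delta[m]}$ via Dupont's recursive formula. For each vertex $e_i$ of $\Delta^m$, introduce the dilation $\phi_i \colon [0,1] \times \Delta^m \to \Delta^m$, $(u,x) \mapsto (1-u) e_i + u x$, and its fiber integration operator
\[
h^{(i)} \eta \;=\; \int_0^1 \phi_i^* \eta \, du,
\]
which is a classical cochain homotopy between $\id$ and evaluation at $e_i$. Dupont's homotopy is then assembled as
\[
h_{\Delta[m]} \;=\; \sum_{p \ge 0} \sum_{f \in I(p,m)} \omega_f \cdot h^{(f(p))} h^{(f(p-1))} \cdots h^{(f(0))},
\]
where $\omega_f$ acts by multiplication and the composition on the right performs iterated fiber integration along the edges of the simplex spanned by $f$. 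This is manifestly polynomial and rational.

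The main work would be to verify the homotopy identity $h_{\Delta[m]} d + d h_{\Delta[m]} = i_m \pi_m - \id$. I would proceed by induction on $m$, using the relation $h^{(i)} d + d h^{(i)} = \id - \operatorname{ev}_{e_i}$ for each elementary operator together with a Koszul-type combinatorial identity that telescopes the iterated sum, with the boundary contributions precisely assembling into the Whitney expansion $\sum_f (\int_{\Delta^p} f^* \eta) \omega_f = \pi_m(\eta)$. Simpliciality---commutation with all $\partial_i$ and $s_j$---follows because both the integrands and the Whitney forms transform correctly under the substitution rules for the $t_k$ given in the remark after Definition \ref{Apl}; concretely, $f^*(t_i) = \sum_{f(j)=i} t_j$ makes the operators $h^{(i)}$ and the Whitney forms $\omega_f$ natural under the simplicial maps of $\Delta$. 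The nilpotencies $h_{\Delta[m]}^2 = 0$ and $h_{\Delta[m]} i_m = 0$ follow from $(h^{(i)})^2 = 0$ and from the fact that $i_m$ lands in the polynomial-degree-$0$ part of each coordinate direction which is annihilated by a single $h^{(i)}$.

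The hardest step, as expected, is the bookkeeping in the homotopy identity: verifying that the boundary terms arising from Stokes and from the cone directions recombine exactly into $i_m \pi_m - \id$. I would follow the elegant reformulation of Getzler \cite{EG}, who writes the whole operator in terms of a single ``universal'' simplicial operator and reduces the identity to a single application of Stokes' theorem on $[0,1]^p \times \Delta^m$. For the algorithmic addendum, I would simply observe that each $h^{(i)}$ sends a monomial in $t_0,\dots,t_m, dt_0, \dots, dt_m$ to a rational linear combination of such monomials by an explicit closed formula (a one-variable polynomial integration), so the entire composite $h_{\Delta[m]}$ is computable on any finite input; combined with the algorithmic integration map in the definition of $\pi_m$, this gives the final sentence of the theorem.
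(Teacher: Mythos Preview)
Your proposal is correct and follows essentially the same route as the paper: the paper's own proof is just a citation to Dupont and Getzler together with the explicit construction of $h_{\Delta[m]}$ via the vertex dilations and the iterated operators $h_f=h_{f(p)}\circ\cdots\circ h_{f(0)}$, which is exactly what you wrote (your $\phi_i$ is the paper's $\widehat f_j$ up to the reparametrization $u\leftrightarrow 1-s$). One small correction: your justification of $h_{\Delta[m]} i_m=0$ (``$i_m$ lands in the polynomial-degree-$0$ part of each coordinate direction'') is not right, since Whitney forms are linear and not constant in the barycentric coordinates; however this side condition is automatic from the others by the Remark after the definition of reduction in the paper, so no extra argument is needed.
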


\begin{proof}
 See the proof of Theorem 2.3 in \cite{JD}  or the proof Lemma 3.4,  Theorem 3.7 and Theorem 3.11 in \cite{EG}.   
\end{proof}

\subsubsection*{Dupont homotopy construction}
The principle of Dupont homotopy comes from the idea of the proof of Poincar\'{e} lemma.
First, we define the map $f_j\colon [0]\to [m]$ with the assignment $f_j(0)=j$ and associate it with the dilation map $\widehat{f}_j\colon \Delta^1\times\Delta^m\to\Delta^m$ 
\[\widehat{f}_j(s,v)=se_j+(1-s)v\]
where $s\in [0,1]$, $v\in\Delta^m$.
For any $\eta\in A_{PL}(\Delta[m])$, there are unique forms $\alpha_{\eta}^j,\beta_{\eta}^j\in A_{PL}(\Delta[m])[s]$ (that is, forms on $\Delta^m$ the coefficients of which are polynomials in the coordinates of $\Delta^m$ and $s$) such that
\[\widehat{f}_j^*(\eta)=(\d{s})\alpha_\eta^j+\beta_\eta^j\] 
where $\widehat{f}_j^*\colon A_{PL}(\Delta[m])\to A_{PL}(\Delta[1])\otimes A_{PL}(\Delta[m])$ is the pullback of $\widehat{f}_j$. Dupont defined $h_j\in\Hom_\Qbb^{-1}(A_{PL}(\Delta[m]),A_{PL}(\Delta[m]))$ by 
\[h_j(\eta) = \int_0^1\alpha_\eta^j\d{s}.\]
If
\[\alpha_\eta^j = (1-s)^as^bt_0^{k_0}t_1^{k_1}\dots t_m^{k_m}\d{t_{c_1}}\dots\d{t_{c_l}},\]
then
\[h_j(\eta) = \left(\int_0^1(1-s)^as^b\d{s}\right)t_0^{k_0}t_1^{k_1}\dots t_m^{k_m}\d{t_{c_1}}\dots\d{t_{c_l}}.\]
Consider this map as defined for the map $f_j$. We can extend the definition for all maps $f\in I(p,m)$ by setting
\[h_f=h_{f(p)}\circ \dots\circ h_{f(0)}\in \Hom_\Qbb^{-p-1}(A_{PL}(\Delta[m]),A_{PL}([\Delta[m])).\] 
Now, the Dupont homotopy $h_{\Delta[m]}\in\Hom_\Qbb^{-1}(A_{PL}(\Delta[m]),A_{PL}(\Delta[m]))$ is defined by
\[h_{\Delta[m]}(\eta)=\sum_{p=0}^{m-1}\sum_{f\in I(p,m)} \omega_f h_f(\eta).\]

\begin{rem}
A careful reader can observe different summation upper bounds in the definition of Dupont homotopy and Dupont projection. The reason is that $I(m,m)=\{\id\}$ and $h_{\id}$ reduces the degree of the form by $(m+1)$, therefore $h_{\id}=0$.

\end{rem}

\begin{exmp}
Consider $f\colon [1]\to [2]$  defined as $f(0)=1, f(1)=2$  and $\eta =t_1^2\d{t_2}$. Our aim is to compute $h_f(\eta)$. At first, we compute $\widehat{f}_1^*(\eta)$ so we need to make the substitution $t_1=(1-s)x_1+s$ and $t_2=(1-s)x_2$ according to $\widehat{f}_1$. Thus, we have 
\[\widehat{f}_1^*(\eta)=[(1-s)^2x_1^2+2(1-s)sx_1+s^2][-x_2\d{s}+(1-s)\d{x_2}]\]
and 
\begin{align*}
\widetilde{\eta}=h_1(\eta)&=-\int_0^1 (1-s)^2\d{s}x_1^2x_2-2\int_0^1 (1-s)s\d{s}x_1x_2-\int_0^1 s^2\d{s}x_2\\
 &=-\frac13x_1^2x_2-\frac13x_1x_2-\frac13x_2.
\end{align*}
As there is no term $\d{x_0},\d{x_1},\d{x_2}$ in the form $\widetilde{\eta}$, we will get $\alpha_{\widetilde{\eta}}=0$ and so $h_2(\widetilde{\eta})=0$. Finally, this implies that $h_f(\eta)=h_2(h_1(\eta))=0$.
\smallskip

Let us compute 
\[h_{\Delta[2]}(\eta)=\sum_{p=0}^{1}\sum_{f\in I(p,2)} \omega_f h_f(\eta). \] 
Since for $f\in I(1,2)$ we get $h_f(\eta)=0$, it suffices to consider only $f_i\in I(0,2)$:

\begin{center}
\begin{tabular}{c|c|c|c}
$f_i$ & $\omega_{f_i}$ & $h_{f_i}$ & $\omega_{f_i}h_{f_i}(\eta)$ \\\hline\rule{0pt}{2.5ex}    
$f_0(0)=0 $ & $t_0=1-t_1-t_2$ & $-\frac{1}{3}t_1^2t_2$ &$ -\frac{1}{3}t_1^2t_2 + \frac{1}{3}t_1^2t_2^2 + \frac{1}{3}t_1^3t_2$ \\\rule{0pt}{2.5ex} 
$f_1(0)=1, $   & $t_1$ & $ -\frac{1}{3}t_2 -\frac{1}{3}t_1t_2 -\frac{1}{3}t_1^2t_2$ &$ -\frac{1}{3}t_1t_2 -\frac{1}{3}t_1^2t_2 -\frac{1}{3}t_1^3t_2$ \\
$f_2(0)=2, $ & $t_2$ &
$ \frac{1}{3}t_1^2 - \frac{1}{3}t_1^2t_2$ &$ \frac{1}{3}t_1^2t_2 -\frac{1}{3}t_1^2t_2^2$ \\
\end{tabular}
\end{center}
The results is
\[h_{\Delta[2]}(\eta)=-\frac{1}{3}t_1t_2 -\frac{1}{3}t_1^2t_2.\]
To show that $\d h_{\Delta[2]}(\eta)+h_{\Delta[2]}\d(\eta)=\pi_2(\eta)-\eta$ we have compute
 $h_{\Delta[2]}(\d\eta)$. Here, calculations are much more complicated since we also use maps $f^a, f^b, f^c\in I(1,2)$, $f^a(0)=0,f^a(1)=1$, $f^b(0)=0,f^b(1)=2$, 
 $f^c(0)=1,f^c(1)=2$. 
 \begin{align*}
\omega_{f_0}h_{f_0}(d\eta)&=\frac{2}{3}t_1t_2\d{t_1} -\frac{2}{3}t_1t_2^2\d{t_1} -\frac{2}{3}t_1^2\d{t_2} + \frac{2}{3}t_1^2t_2\d{t_2} -\frac{2}{3}t_1^2t_2\d{t_1}+ \frac{2}{3}t_1^3\d{t_2},\\
\omega_{f_1}h_{f_1}(d\eta)&=\frac{1}{3}t_1\d{t_2} + \frac{1}{3}t_1t_2\d{t_1} + \frac{1}{3}t_1^2\d{t_2} + \frac{2}{3}t_1^2t_2\d{t_1} -\frac{2}{3}t_1^3\d{t_2},\\
\omega_{f_2}h_{f_2}(d\eta)&=-\frac{2}{3}t_1t_2\d{t_1} + \frac{2}{3}t_1t_2^2\d{t_1} -\frac{2}{3}t_1^2t_2\d{t_2},\\
\omega_{f^a}h_{f^a}(d\eta)&= \frac{1}{3}t_2\d{t_1} -\frac{1}{3}t_2^2\d{t_1} + \frac{1}{3}t_1t_2\d{t_2} + \frac{1}{3}t_1t_2\d{t_1} -\frac{1}{3}t_1t_2^2\d{t_1} + \frac{1}{3}t_1^2t_2\d{t_2},\\
\omega_{f^b}h_{f^b}(d\eta)&=-\frac{1}{3}t_1^2\d{t_2} -\frac{1}{3}t_1^2t_2\d{t_1} + \frac{1}{3}t_1^3\d{t_2},\\
\omega_{f^c}h_{f^c}(d\eta)&=-\frac{1}{3}t_2\d{t_1} + \frac{1}{3}t_2^2\d{t_1} + \frac{1}{3}t_1\d{t_2} -\frac{1}{3}t_1t_2\d{t_2} + \frac{1}{3}t_1t_2^2\d{t_1} -\frac{1}{3}t_1^2t_2\d{t_2} \\
&+\frac{1}{3}t_1^2t_2\d{t_1} -\frac{1}{3}t_1^3\d{t_2}.
\end{align*} 
Summary results are:
\begin{align*}
h_{\Delta[2]}(\d\eta)&= \frac{2}{3}t_1\d{t_2} + \frac{2}{3}t_1t_2\d{t_1} -\frac{2}{3}t_1^2\d{t_2},\\
\d h_{\Delta[2]}(\eta)&=-\frac{1}{3}t_2\d{t_1} -\frac{1}{3}t_1\d{t_2} -\frac{2}{3}t_1t_2\d{t_1} -\frac{1}{3}t_1^2\d{t_2},\\
\pi_2(\eta)&=\left(\int_{\Delta[1]} (f^c)^*(\eta)\right)\omega_{f^c}= \int_{\Delta[1]} t_0^2\d{t_1}\omega_{f^c}=  \int_{\Delta[1]} (1-t_1)^2\d{t_1}\omega_{f^c} =\frac{1}{3}(t_1\d{t_2} - t_2\d{t_1}),\\
\eta &= t_1^2\d{t_2}.
\end{align*}
It's clear that $dh_{\Delta[2]}(\eta) + h_{\Delta[2]}(d\eta) = \pi_2(\eta)-\eta$.
\end{exmp}

\begin{cor}
Let $X$ be a simplicial set. Then, there is a reduction 
\[A_{PL}(X)\Longrightarrow W(X).\]
\end{cor}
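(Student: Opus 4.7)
The plan is to push the simplex-level reduction from the preceding theorem through Construction \ref{constr}. For each $n\geq 0$ the triple $(\pi_n, i_n, h_{\Delta[n]})$ is a reduction of DG vector spaces from $(A_{PL})_n$ to $W_n$, and, crucially, its three components are \emph{simplicial}, that is, they commute with the face and degeneracy operators of $A_{PL}$ and $W$. This is exactly the data needed to apply the functor $\Hom_{\operatorname{sSet}}(X,-)$ underlying Construction \ref{constr} and obtain induced DG vector space maps between $A_{PL}(X)$ and $W(X)$.

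Concretely, I would define
\[
\pi^X\colon A_{PL}(X)\to W(X),\qquad (\pi^X(\varphi))_\sigma = \pi_n(\varphi_\sigma) \text{ for } \sigma\in X_n,
\]
and define $i^X$ and $h^X$ by the same recipe, substituting $i_n$ and $h_{\Delta[n]}$ in place of $\pi_n$. Because $\pi$, $i$, $h$ commute with face and degeneracy operators, the assignments $\sigma\mapsto \pi_n(\varphi_\sigma)$, etc.\ really are simplicial maps $X\to W^p$ or $X\to A_{PL}^p$, so they land in $W(X)$ respectively $A_{PL}(X)$. Since addition, scalar multiplication, and the differential in $A_{PL}(X)$ and $W(X)$ are all defined pointwise on simplices (Construction \ref{constr}), $\pi^X$ and $i^X$ are cochain maps of degree $0$ and $h^X$ is of degree $-1$.

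All reduction axioms and side conditions then reduce to identities checked simplex by simplex. For instance, for $\sigma\in X_n$ and $\varphi\in A_{PL}(X)$,
\[
((i^X\pi^X-\id)\varphi)_\sigma = (i_n\pi_n-\id)(\varphi_\sigma) = (dh_{\Delta[n]}+h_{\Delta[n]}d)(\varphi_\sigma) = ((dh^X+h^Xd)\varphi)_\sigma,
\]
and similarly $\pi^Xi^X=\id$ together with $(h^X)^2=0$, $\pi^Xh^X=0$, $h^Xi^X=0$ follow directly from the corresponding identities at each level $n$. I expect no real obstacle here — the content is entirely in the preceding theorem and this corollary is a routine pointwise globalization. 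It is, however, worth recording for the later algorithmic use that each value of $\pi^X$, $i^X$, $h^X$ on a simplex $\sigma\in X_n$ is obtained simply by evaluating $\varphi_\sigma$ in the finite-dimensional space $(A_{PL})_n$ and applying the algorithmically constructed operators $\pi_n$, $i_n$, $h_{\Delta[n]}$, so the whole reduction is computable whenever $X$ and its sections are.
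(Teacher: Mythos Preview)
Your proposal is correct and follows exactly the same approach as the paper: the paper's proof is the one-sentence observation that the simplicial reduction $(\pi,i,h)\colon A_{PL}\Rightarrow W$ induces, via $\Hom_{\operatorname{sSet}}(X,-)$, a reduction $A_{PL}(X)\Rightarrow W(X)$. You have simply spelled out the pointwise definitions and verifications that justify that sentence, along with the useful remark on computability.
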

\begin{proof}
The statement is a direct consequence of the previous theorem. The homomorphisms describing the reduction $A_{PL}\Longrightarrow W$ are simplicial, therefore, the induced maps between $A_{PL}(X)=\Hom_{\operatorname{sSet}}(X,A_{PL})$ and $W(X)=\Hom_{\operatorname{sSet}}(X,W)$ also form a reduction.
\end{proof}

\subsection*{The simplicial de Rham Theorem over $\Qbb$}
This part focuses on an explicit description of reduction $A_{PL}(X)\Longrightarrow C^*(X;\Qbb)$ derived by Dupont \cite[Chapter 2]{JD2}. Dupont mentions that Dennis Sullivan obtained a similar result. 

\begin{thm}[Dupont, \cite{JD2}]\label{deRham}
There are natural chain maps $\Ee\colon C^*(X;\Qbb)\to A^*_{PL}(X)$ and  $\I\colon A^*_{PL}(X)\to C^*(X;\Qbb)$
together with natural chain homotopies
$\Ss\colon A^*_{PL}(X)\to A^{*-1}_{PL}(X)$, 
such that 
\begin{align*}
\I\circ \Ee-\id &= 0,\\
\Ss\circ d+d\circ \Ss&=\Ee\circ \I - \id.
\end{align*}
Moreover, homotopy $\Ss$ has the properties $\Ss^2=0$ and $\I\circ \Ss=0$ (and so $\Ss\circ E=0$). 
 That means that \[A_{PL}(X)\Longrightarrow C^*(X;\Qbb).\]
\end{thm}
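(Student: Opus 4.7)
The plan is to assemble the desired reduction $A_{PL}(X)\Rightarrow C^*(X;\Qbb)$ by composing two pieces already established in this section: the Dupont reduction $(\pi_X, i_X, h_X)\colon A_{PL}(X)\Rightarrow W(X)$ obtained from the preceding corollary (itself induced from the simplicial reduction $(\pi_m, i_m, h_{\Delta[m]})$ by applying $\Hom_{\operatorname{sSet}}(X,-)$), together with the isomorphism $\Phi_X\colon W(X)\xrightarrow{\cong} C^*(X;\Qbb)$ obtained as the composition of the simplicial isomorphism $DR$ with Watkins' isomorphism from Theorem \ref{CCpl}. Since an isomorphism of cochain complexes is a trivial reduction, pasting it onto the target of $(\pi_X, i_X, h_X)$ again yields a reduction, and everything should follow formally.

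Concretely, I would set
\[
\I = \Phi_X\circ\pi_X,\qquad \Ee = i_X\circ\Phi_X^{-1},\qquad \Ss = h_X,
\]
and then check the required identities by direct computation. The equation $\I\circ\Ee=\id$ collapses to $\pi_X\circ i_X=\id_{W(X)}$ once $\Phi_X\Phi_X^{-1}$ cancels. The homotopy identity $\Ss\circ d+d\circ\Ss=\Ee\circ\I-\id$ reduces to $h_X d+d h_X=i_X\pi_X-\id$ after inserting $\Phi_X^{-1}\Phi_X$ in the middle of $\Ee\circ\I$. The side conditions $\Ss^2=0$ and $\I\circ\Ss=0$ translate directly to $h_{\Delta[m]}^2=0$ and $\pi_m h_{\Delta[m]}=0$, which are asserted in the preceding theorem attributed to Dupont and Getzler; the implication $\Ss\circ\Ee=0$ follows either from $h_{\Delta[m]}i_m=0$ or from the general remark in Section 2 that the other side conditions force it.

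Naturality in $X$ is immediate from the functoriality of $\Hom_{\operatorname{sSet}}(X,-)$ combined with the naturality of $DR$ and of Watkins' comparison. The only genuinely nontrivial content has already been packaged in the Dupont--Getzler result; the main obstacle, such as it is, is merely to confirm that all three reduction axioms pass pointwise through the functor $\Hom_{\operatorname{sSet}}(X,-)$. This is routine, because each of $\pi_m$, $i_m$, $h_{\Delta[m]}$ is a simplicial map and therefore commutes with all face and degeneracy operators, so Construction \ref{constr} assembles them into maps on $A_{PL}(X)$ and $W(X)$ that satisfy the same algebraic relations verbatim.
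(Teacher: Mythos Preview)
Your proposal is correct and is essentially the same approach as the paper's: both obtain the reduction $A_{PL}(X)\Rightarrow C^*(X;\Qbb)$ as the composite of the Dupont--Getzler reduction $A_{PL}(X)\Rightarrow W(X)$ with the isomorphism $W(X)\cong C_{PL}(X)\cong C^*(X;\Qbb)$, and both defer the side conditions to the cited results of Dupont and Getzler. The only difference is one of emphasis: the paper first writes out the resulting maps $\I,\Ee,\Ss$ by explicit formulas (evaluation via integration over simplices, Whitney interpolation, and the Dupont homotopy respectively), which matters for the paper's algorithmic aims, and then remarks that these formulas are exactly the composite you describe.
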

\begin{proof}
Here, we summarize key constructions from the proof of Theorem 2.16 in \cite{JD2}. 
For algorithms, one needs to supply the definitions o maps. (The remaining formalities are available in the referenced proof.)
\begin{align*}
 \I(\varphi)_\sigma&=\int_{\Delta^k}\varphi_\sigma,\quad \varphi\in A^k_{PL}(X), \sigma\in X_k,\\
 \Ee(\psi)_\sigma&=\sum_{f\in I(k,m)} \omega_f\cdot \psi_{f^*(\sigma)}, \quad \psi\in C^k(X;\Qbb),\sigma\in X_m\\
 \Ss(\varphi)_\sigma&= \sum_{p=0}^{k-1}\sum_{f\in I(p,k)}\omega_f h_f(\varphi_\sigma), \quad \varphi\in A^k_{PL}(X),\sigma\in X_m,
\end{align*}
where $f^*\colon X_k\to X_m$ is a simplicial map induced by $f$.
The additional properties of $\Ss$ are due to Lemma 3.4 part i) and Theorem 3.11 in \cite{EG}.
\end{proof}

One can observe that the composition $\Ee\circ \I$ is in a relationship with the definition of $\pi_m$ and $\Ss$ is related to $h_{\Delta[m]}$ from the previous subsection. It is straightforward to verify that the reduction $(\I,\Ee,\Ss_k)$ is a composition of the isomorphism $\circled{1}$ and the reduction $\circled{2}$.


\section{Main results}

We are now ready to formulate and briefly prove our main results.

\begin{mythm}{C}\label{minModA}
 There is an algorithm that, for a simply connected simplicial set $X$ with effective homology and for a number $d\in\Nbb$, computes the minimal model of $X$ up to degree $d$.
\end{mythm} 

\begin{proof}
The effective homology of $X$ provides the strong homotopy equivalence $C^*(X;\Qbb)$ $\Longleftrightarrow C^*_{\ef}$ where $C^*_{\ef}=(C_*^{\ef}\otimes \mathbb Q)^*$ and $C_*^{\ef}$ is  effective homology of $X$. Theorem \ref{deRham} provides a reduction $A_{PL}(X)\Longrightarrow C^*(X;\Qbb)$. Then, we compose both strong homotopy equivalences into one strong homotopy equivalence $C^*_{ef}\Longleftrightarrow A_{PL}(X)$.
Note that $A_{PL}(X)$ is simply connected as the same property has the simplicial set $X$. Now, we can apply Theorem \ref{minMod} to get a minimal model of $X$ up to degree $d$.
\end{proof}

\begin{mythm}{D}\label{main}
 There is an algorithm that decides, for given finite simply connected simplicial sets $X$ and $Y$, whether $|X|$ and $|Y|$ have the same rational homotopy type.    
\end{mythm}

\begin{proof}
 Let $d$ be an integer such that $\dim X\le d$ and $\dim Y\le d$.  Using the algorithm of Theorem \ref{minModA} we construct their minimal models $\M_X(d)$ and $\M_Y(d)$ up to degree $d$.
 According to Corollary \ref{ModCor} they are minimal models of Postnikov stages $X_d$ and $Y_d$ of $X$ and $Y$, respectively. Once the minimal models of $X_d$ and $Y_d$ are available, we determine if they are isomorphic by applying the algorithm in Theorem \ref{IsoMod}. Then we apply Proposition \ref{criterion} to decide whether $X$ and $Y$ have the same rational homotopy type.
\end{proof}

\bibliographystyle{amsalpha}
\bibliography{bibtex}

\end{document}